\documentclass[a4paper,reqno,11pt,oneside]{amsart}
\usepackage{amsmath,geometry}
\usepackage{graphicx}
\usepackage{float}  
\usepackage{subfig}  
\usepackage{mathrsfs}
\usepackage{amssymb,amsmath, amsfonts, amsthm,color}
\usepackage{latexsym}
\usepackage{color}
\usepackage[dvips]{epsfig}
\usepackage[colorlinks]{hyperref}
\usepackage{comment}
\usepackage{cite}
 
\geometry{top=3.0cm, bottom=3cm, left=2cm, right=2cm}

\usepackage{tikz}
\usepackage{pgfplots}
\usetikzlibrary{patterns}

\numberwithin{equation}{section}

\newcommand\G{\mathcal{G}}
\newcommand\E{\mathbb{E}}
\newcommand\V{\mathbb{V}}

\newcommand\vv{\textsc{v}}

\newcommand{\starN}{\mathcal S_N}

\newcommand{\LL}{ \mathscr L}
\newcommand{\NN}{ \mathscr N}
\newcommand{\EE}{ \mathscr E}

\newcommand{\oG}{\overline G}

\usetikzlibrary{shapes.geometric,calc,decorations.markings,math}
\usetikzlibrary{trees,snakes,decorations.pathreplacing}
\tikzstyle{nodo}=[circle,draw,fill,inner sep=0pt,minimum size=0.5*width("k")]
\tikzstyle{infinito}=[circle,inner sep=0pt,minimum size=0mm]
\tikzset{every loop/.style={min distance=10mm,in=300,out=240,looseness=10}}
\tikzset{place/.style={circle,thick,draw=blue!75,fill=blue!20,minimum
		size=6mm}}
\tikzset{place2/.style={circle,thick,draw=red!75,fill=red!20,minimum
		size=6mm}}

\newcommand{\eps}{\varepsilon}

\newcommand{\R}{\mathbb{R}}
\renewcommand{\(}{\left(}
\renewcommand{\)}{\right)}

\newcommand{\dx}{\,dx}

\newtheorem{theorem}{Theorem}[section]
\newtheorem*{theorem*}{Theorem}
\newtheorem{lemma}[theorem]{Lemma}

\newtheorem{proposition}[theorem]{Proposition}
\newtheorem{corollary}[theorem]{Corollary}
\theoremstyle{definition}
\newtheorem{remark}[theorem]{Remark}
\newcommand{\dg}{\text{\normalfont deg}}

\title[ ]{Existence and multiplicity of Peaked bound states for nonlinear Schr\"odinger equations on metric graphs}

\author{Haixia Chen}
\address[H. Chen]{
	Department of mathematics, College of natural sciences, Hanyang University, 222 Wangsimni-ro Seongdong-gu, 04763 Seoul (Republic of Korea)}
\email{hxchen29@hanyang.ac.kr}

\author{Simone Dovetta}
\address[S. Dovetta]{
Dipartimento di Scienze Matematiche, Politecnico di Torino, Corso Duca degli Abruzzi 24 10129 Torino (Italy)
}
\email{simone.dovetta@polito.it}

\author{Angela Pistoia}
\address[A. Pistoia]{Dipartimento SBAI,  Sapienza Università di Roma, Via Antonio Scarpa 16
	00161 Roma (Italy)}
\email{angela.pistoia@uniroma1.it}

\author{Enrico Serra}
\address[E. Serra]{Dipartimento di Scienze Matematiche, Politecnico di Torino, Corso Duca degli Abruzzi 24 10129 Torino (Italy)
}
\email{enrico.serra@polito.it}
\date\today
\subjclass[2010]{35Q55, 35R02}
\keywords{metric graphs, nonlinear Schr\"odinger, peaked bound states, Ljapunov-Schmidt reduction}
\thanks{H. Chen is partially supported by the NSFC grants (No.12071169). S. Dovetta is partially supported by the PRIN 2022 project E53D23005450006 and by the INdAM-GNAMPA 2023 project {\em Modelli nonlineari in presenza di interazioni puntuali}. A. Pistoia is partially supported by INdAM-GNAMPA funds and Fondi di Ateneo ``La Sapienza" Universit\`a di Roma (Italy).\\
Data sharing not applicable to this article as no datasets were generated or analysed during the current study.}

\begin{document}
	
	\begin{abstract} We establish existence and multiplicity of one-peaked and multi-peaked positive bound states for nonlinear Schr\"odinger equations on general compact and noncompact metric graphs. Precisely, we construct solutions concentrating at every vertex of odd degree greater than or equal to $3$. We show that these solutions are not minimizers of the associated action and energy functionals. To the best of our knowledge, this is the first work exhibiting solutions concentrating at vertices with degree different than $1$. The proof is based on a suitable Ljapunov-Schmidt reduction.
	\end{abstract}

	\maketitle

	\section{Introduction}
	In this paper we are interested in existence and multiplicity of positive solutions of nonlinear Schr\"odinger equations
	\begin{equation}
	\label{NLS}
	-u''+\lambda u=u^{2\mu+1}\,,
	\end{equation}
	where $\mu>0$ and $\lambda\in\R$, on metric graphs.
	
	The class of graphs we will consider is rather general. In what follows, we assume that $\G=(\V,\E)$ is a connected metric graph such that
	\begin{itemize}
		\item the set of vertices $\V$ and that of edges $\E$ are at most countable;
		
		\item the degree $\dg(\vv)$ of a vertex $\vv$, i.e. the number of edges incident at it, is finite for every $\vv\in\V$;
		
		\item the length $|e|$ of the edge $e$ is bounded away from zero uniformly on $e\in\E$, that is $\displaystyle\inf_{e\in\E}|e|>0$.
	\end{itemize}
	For the sake of brevity, we will say that $\G$ belongs to the class $\mathbf{G}$ whenever it satisfies these conditions. As usual, each bounded edge $e\in\E$ is identified with an interval $[0,\ell_e]$ with $\ell_e:=|e|$, whereas each unbounded edge (if any) is identified with (a copy of) the positive half-line $\R^+$. Graphs in $\mathbf{G}$ are noncompact as soon as they have at least one unbounded edge or  have infinitely many edges. Figure \ref{graph} shows a typical example of metric graph in $\mathbf{G}$. For standard definitions of functional spaces on graphs we redirect e.g. to \cite{BK}.
	
	\begin{figure}[t]
		\begin{center}
			\begin{tikzpicture}
			[scale=1.3,style={circle,
				inner sep=0pt,minimum size=7mm}]
			\node at (0,0) [nodo] (1) {};
			\node at (-1.5,0) [infinito]  (2){$\displaystyle\infty$};
			\node at (1,0) [nodo] (3) {};
			\node at (0,1) [nodo] (4) {};
			\node at (-1.5,1) [infinito] (5) {$\displaystyle\infty$};
			\node at (2,0) [nodo] (6) {};
			\node at (3,0) [nodo] (7) {};
			\node at (2,1) [nodo] (8) {};
			\node at (3,1) [nodo] (9) {};
			\node at (4.5,0) [infinito] (10) {$\displaystyle\infty$};
			\node at (5.5,0) [infinito] (11) {$\displaystyle\infty$};
			\node at (4.5,1) [infinito] (12) {$\displaystyle\infty$};
			\draw [-] (1) -- (2) ;
			\draw [-] (1) -- (3);
			\draw [-] (1) -- (4);
			\draw [-] (3) -- (4);
			\draw [-] (5) -- (4);
			\draw [-] (3) -- (6);
			\draw [-] (6) -- (7);
			\draw [-] (6) to [out=-40,in=-140] (7);
			\draw [-] (3) to [out=10,in=-35] (1.4,0.7); 
			\draw [-] (1.4,0.7) to [out=145,in=100] (3); 
			\draw [-] (6) to [out=40,in=140] (7);
			\draw [-] (6) -- (8);
			\draw [-] (6) to [out=130,in=-130] (8);
			\draw [-] (7) -- (8);
			\draw [-] (8) -- (9);
			\draw [-] (7) -- (9);
			\draw [-] (9) -- (12);
			\draw [-] (7) -- (10); 
			\draw [-] (7) to [out=40,in=140] (11);
			\end{tikzpicture}
		\end{center}
		\caption{Example of a (noncompact) metric graph in $\mathbf{G}$ with $5$ unbounded edges and $13$ bounded edges, one of which forms a self-loop.}
		\label{graph}
	\end{figure}
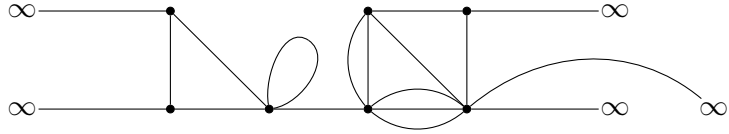

	The interest in metric graphs has been growing through the decades to become today an active research field with an inter-sectoral popularity within the scientific community. In fact, major contributions to the development of the topic stem from the fact that it attracted attention both in applied sciences and in more theoretical ones. In a wide variety of applications, indeed, metric graphs may serve as simplified models for higher dimensional branched or ramified domains, i.e. structures where the transverse dimensions are negligible with respect to the longitudinal one. At the same time, in many cases the mathematical analysis of problems on graphs proves to be of interest {\em per se}, exhibiting elements of novelty compared to standard Euclidean settings. 
	
	In the context of differential models on metric graphs, both linear and nonlinear models have been largely investigated (see e.g. \cite{BKKM,FMN,HKMP,KKLM} and references therein for some recent results in the linear setting, and the reviews \cite{ABR,KNP} for comprehensive overviews on nonlinear problems). Within the nonlinear theory, a significant attention has been focused on nonlinear dispersive equations, such as Schr\"odinger and Dirac equations, also in view of their possible application in the innovative high-tech research field of atomtronics (see \cite{atom} for a wide introduction to the subject). In particular, many efforts have been devoted to the search of bound states, i.e. solutions of the associated stationary equation (e.g. \eqref{NLS} for the nonlinear Schr\"odinger equation) coupled with various vertex conditions, unraveling a deep dependence of the problem on both topological and metric properties of graphs (see for instance \cite{ABD,acfn_aihp,ADST,ASTbound,AST1,AST,ACT,BMP,BC,BDL20,BDL21,BD1,BD2,BCJS,CJS,DDGS,DDGST,DGMP,DST20,DT,KMPX,NP,pankov,PS20,PSV} for Schr\"odinger equations and \cite{BCT1,BCT2} for Dirac equations). 
	
	The present paper fits in the investigation of bound states for nonlinear Schr\"odinger equations on metric graphs focusing specifically on the problem
	\begin{equation}
	\label{nlse}
	\begin{cases}
	- u''+\lambda u=u^{2\mu+1} & \text{ on every edge of } \G \\
	\sum_{e\succ \vv} u_e'(\vv)=0 & \text{ for every } \vv\in \mathbb V\,,
	\end{cases}
	\end{equation}
	that is \eqref{NLS} together with homogeneous Kirchhoff conditions at the vertices. Here, $u_e'(\vv)$ denotes the outgoing derivative of $u$ at $\vv$ along the edge $e$ and  $e\succ \vv$ means that the sum is extended to all edges incident at $\vv$. 
	
	The aim of this work is to prove existence and multiplicity of  $H^1$ positive bound states of \eqref{nlse} concentrating at given vertices of $\G$ as the parameter $\lambda$ goes to $+\infty$. 
	
	Before stating our main results, we need to recall what is known about positive solutions of \eqref{nlse} on  star graphs. Given any integer $N\geq1$, the $N$-star graph $\starN$ is the graph made up of a single vertex, identified with 0, and $N$ half-lines attached to it. Clearly, $\mathcal S_1=\R^+$ and $\mathcal S_2=\R$. A function $\Psi \in H^1(\starN)$ can be seen as an $N$--tuple  $(\psi_1, \dots,\psi_N)$, where $\psi_i\in H^1(\R^+)$ for every $i$ and $\psi_i(0) =\psi_j(0)$ for every $i,j$. Note that, by dilation invariance, for every $\mu>0$ and $\lambda>0$ any solution of \eqref{nlse} on $\starN$ is given by $\lambda^{\frac1{2\mu}}\Psi\big(\sqrt{\lambda}\,x\big)$, where $\Psi=(\psi_1, \dots,\psi_N)$ solves
	\begin{equation}
	\label{main}
	\begin{cases}
	-\psi_i''+\psi_i=\left|\psi_i\right|^{2 \mu} \psi_i & \text{on }\R^+,\quad\forall i=1\dots,N \\
	\sum_{i=1}^N \psi_i'(0)=0\,, & 
	\end{cases}
	\end{equation}
	namely \eqref{nlse} with $\lambda=1$.

	On the real line (i.e. $N=2$), it is well-known that the set of $L^2$ positive solutions of \eqref{main} is the family of solitons
	\begin{equation}
	\label{phi}
	\phi_a(x):=\phi(x-a),\qquad\text{with}\qquad\phi(x):=(\mu+1)^{\frac{1}{2 \mu}} \operatorname{sech}^{\frac{1}{\mu}}(\mu x)\,.
	\end{equation}
	Similarly, on the half-line $\R^+$ (that is $N=1$), problem \eqref{main} has a unique $L^2$ positive solution given by the restriction of $\phi$ to $\R^+$, named the half-soliton. For $N\geq3$, the $H^1$ solutions to problem \eqref{main} have been completely classified e.g. in \cite{acfn_pl}. Precisely, if $N$ is odd, \eqref{main} has a unique positive solution $\Psi_N = (\psi_1,\dots,\psi_N)$ given by
	\begin{equation}\label{fi}
	\psi_i (x)=\phi(x)\qquad\forall  i=1,\dots,N,
	\end{equation}
	where $\phi\in H^1(\R^+)$ is the half-soliton introduced in \eqref{phi}.
	When $N$ is even, on the contrary, \eqref{main} admits infinitely many positive solutions $\Psi_N^a = (\psi^a_1,\dots,\psi^a_N)$ (with $a\in\mathbb R$), described (after a possible permutation of indices) through $\phi_a$ in \eqref{phi} as
	\begin{equation}\label{psia}
	\psi^a_i(x) = \begin{cases} \phi_a(x) & \text{ if } i=1,\dots,N/2 \\
	\phi_{-a}(x) & \text{ if }  i=  N/2+1,\dots, N\,.
	\end{cases}
	\end{equation}
	Let us note incidentally here that, for general metric graphs, a complete description of the set of solutions of \eqref{nlse} is usually out of reach. To the best of our knowledge, besides star graphs the unique case for which this is available is the $\mathcal T$-graph (two half--lines and a bounded edge glued together at the same vertex), that has been discussed in the recent paper \cite{ACT}, whereas partial results in this direction have been given e.g. for the tadpole graph (a circle attached to an half-line) and the double-bridge graph (a circle attached to two half-lines at different points)  in \cite{NPS,NRS}.  
	
	We are now in position to state the main results of our paper, that are given in the  next two theorems.
	
	\begin{theorem}
		\label{main1} 
		Let $\G\in\mathbf{G}$ and  $\mu\ge\frac12$. Let $\overline\vv\in \V$ be a vertex of $\G$ such that $N:=\dg(\overline\vv)\geq3$ is odd. Then there exists $\lambda_0:=\lambda_0(\overline\vv)>0$ such that, for every $\lambda\geq \lambda_0$, there exists a positive $H^1$ solution $u_\lambda$  of \eqref{nlse} with a single peak at $\overline\vv$. More precisely, identifying $\overline\vv$ with $0$ along each edge $e\succ\overline\vv$ and setting $\ell_{\overline\vv}:=\min_{e\succ\overline \vv}|e|/2$, as $\lambda \to +\infty$ there holds
		\begin{equation}
		\label{u1picco}
		u_\lambda(x)= \lambda^{\frac{1}{2\mu}}\chi(x) \Psi_N\left(\sqrt{\lambda} x\right)+ \Phi(x),
		\end{equation}
		where $\chi:\G\to[0,1]$ is a smooth function satisfying $\chi\equiv 1$ on the neighbourhood of radius $\ell_{\overline\vv}$ of $\overline\vv$ in $\G$ and $\chi\equiv0$ outside the neighbourhood of radius $2\ell_{\overline\vv}$ of $\overline\vv$ in $\G$, $\Psi_N$ is given in \eqref{fi} and $\Phi$ satisfies
		\[
		\| \Phi\|_\lambda :=\sqrt{\|\Phi'\|_{L^2(\G)}^2 + \lambda  \|\Phi\|_{L^2(\G)}^2} = o\left(\lambda^{\frac14+\frac{1}{2\mu}}\right).
		\]
	\end{theorem}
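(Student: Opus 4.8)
The plan is to construct the solution via a Ljapunov-Schmidt reduction, writing $u_\lambda = W_\lambda + \Phi$ where $W_\lambda(x) := \lambda^{\frac{1}{2\mu}}\chi(x)\Psi_N(\sqrt\lambda\, x)$ is the \emph{approximate solution} obtained by truncating the half-soliton profile $\Psi_N$ near $\overline\vv$, and $\Phi$ is a small remainder to be determined. The natural functional setting is the Hilbert space $H^1(\G)$ equipped with the $\lambda$-dependent norm $\|\cdot\|_\lambda$; the first step is to rescale the problem (setting $x = y/\sqrt\lambda$) so that the equation on each edge incident at $\overline\vv$ becomes $-\psi''+\psi = |\psi|^{2\mu}\psi$ with the Kirchhoff condition, matching \eqref{main}, and so that $\Psi_N$ from \eqref{fi} is an exact solution of the limiting problem on the model star graph $\mathcal S_N$.

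The core of the argument rests on the invertibility of the linearized operator. First I would compute the \emph{error} $\mathcal E_\lambda := -W_\lambda'' + \lambda W_\lambda - W_\lambda^{2\mu+1}$ in the dual norm; because $\chi$ is supported in the ball of radius $2\ell_{\overline\vv}$ and equals $1$ on the ball of radius $\ell_{\overline\vv}$, the error is concentrated in the transition annulus where $\chi$ varies, and the exponential decay of the half-soliton $\phi(x)\sim e^{-x}$ forces $\mathcal E_\lambda$ to be exponentially small in $\sqrt\lambda$, hence $o(\lambda^{\frac14+\frac1{2\mu}})$ in the appropriate norm. Next, I would linearize at $W_\lambda$, obtaining the operator $L_\lambda \Phi := -\Phi'' + \lambda\Phi - (2\mu+1)W_\lambda^{2\mu}\Phi$. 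The essential point is that the rescaled model operator $\mathcal L := -\partial_y^2 + 1 - (2\mu+1)\Psi_N^{2\mu}$ on $\mathcal S_N$ is \emph{nondegenerate} up to its kernel: here the hypothesis that $N$ is odd is decisive, since for odd $N$ the solution $\Psi_N$ is the symmetric half-soliton $(\phi,\dots,\phi)$ and the kernel of $\mathcal L$ is generated solely by the (odd) translation mode $\Psi_N'$, which is killed by the symmetry/Kirchhoff structure, leaving $\mathcal L$ coercive on the orthogonal complement. I expect this spectral analysis of $\mathcal L$—establishing that $0$ is isolated in the spectrum and identifying the finite-dimensional kernel—to be the \textbf{main obstacle}, as it requires the full classification from \cite{acfn_pl} together with a careful Sturm-Liouville / symmetry argument to rule out even kernel elements.

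With the model operator understood, I would transfer coercivity to $L_\lambda$ on the (compact-plus-half-line) graph $\G$ by a perturbation argument: away from $\overline\vv$ the potential term $(2\mu+1)W_\lambda^{2\mu}$ is exponentially small, so $L_\lambda$ behaves like $-\partial_x^2+\lambda$, which is uniformly coercive in $\|\cdot\|_\lambda$; near $\overline\vv$ the rescaling reduces $L_\lambda$ to $\mathcal L$ plus a vanishing error. Projecting out the (one-dimensional, by oddness of $N$) approximate kernel, I obtain a uniform lower bound $\|L_\lambda\Phi\|_\lambda \geq c\,\|\Phi\|_\lambda$ on the orthogonal complement, uniformly for $\lambda \geq \lambda_0$. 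This yields invertibility of the projected linear operator with norm bounded independently of $\lambda$.

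Finally, I would recast \eqref{nlse} as a fixed-point problem for $\Phi$ in the orthogonal complement: schematically $\Phi = L_\lambda^{-1}\big(\mathcal E_\lambda + \mathcal N_\lambda(\Phi)\big)$, where $\mathcal N_\lambda(\Phi)$ collects the superlinear remainder $(W_\lambda+\Phi)^{2\mu+1} - W_\lambda^{2\mu+1} - (2\mu+1)W_\lambda^{2\mu}\Phi$. The restriction $\mu \geq \frac12$ guarantees that the nonlinearity is sufficiently regular (so that $\mathcal N_\lambda$ is at least quadratic and contractive) for a contraction-mapping argument on a small ball of radius $o(\lambda^{\frac14+\frac1{2\mu}})$ in $\|\cdot\|_\lambda$; combining the error estimate with the Sobolev/Gagliardo-Nirenberg control of $\mathcal N_\lambda$ closes the fixed point and produces a solution $\Phi$ of the \emph{auxiliary equation}. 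To complete the reduction one must solve the \emph{bifurcation equation}—vanishing of the projection onto the approximate kernel—but since the kernel direction corresponds to translating the peak along the edges and $\overline\vv$ is a fixed vertex, this scalar equation is solved by the built-in symmetry (or by a variational/degree argument on the reduced energy), yielding the genuine solution $u_\lambda$ with the stated asymptotics. Positivity follows since $W_\lambda > 0$ and $\Phi$ is a lower-order correction.
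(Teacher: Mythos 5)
There is a genuine gap, and it sits exactly at the point you flag as merely ``decisive'': your description of the kernel of the model linearization is wrong, and with it the whole treatment of the bifurcation equation collapses. By Lemma \ref{non}, the linearized problem \eqref{linear} on $\starN$ has an $(N-1)$--dimensional kernel for \emph{every} $N\ge 2$, odd or even, spanned by $Z^{(j)}=\phi'\,e^j$ with $e^j_1+\dots+e^j_N=0$; these modes are \emph{not} killed by the Kirchhoff structure (continuity holds because $\phi'(0)=0$, and the Kirchhoff condition is exactly $\sum_i c_i\,\phi''(0)=0$, which the zero-sum vectors satisfy). So the operator is degenerate in $N-1$ directions regardless of parity, the bifurcation equation is an $(N-1)$--dimensional system rather than a scalar one, and on a general graph $\G$ there is no symmetry at $\overline\vv$ (the edges continue into an arbitrary, non-symmetric structure) that forces the kernel projections to vanish. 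Your ansatz $W_\lambda=\chi\Psi_\lambda$ carries no free parameters, so once the auxiliary equation is solved you have no way to make the $N-1$ projections vanish; ``built-in symmetry'' is not available, and the parenthetical ``degree argument on the reduced energy'' is precisely the nontrivial content you have skipped.

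What the paper actually does is enlarge the ansatz to $W_\lambda=\chi\bigl(\Psi_\lambda+\sum_{j=1}^{N-1}b_{j,\lambda}Z^{(j)}_\lambda\bigr)$ with $b_{j,\lambda}=\mathfrak b_j\lambda^{-\alpha}$, so that the $\mathfrak b_j$ become the unknowns of the finite-dimensional problem. A second-order expansion of $f(s)=(s^+)^{2\mu+1}$ (this, not contractivity of $\NN$, is where $\mu\ge\frac12$ is really needed) shows that the projection of $\LL(\Phi)-\EE-\NN(\Phi)$ onto $\chi Z^{(j)}_\lambda$ equals $-A\lambda^{\frac12+\frac1\mu-2\alpha}\,\partial_{\mathfrak b_j}G(\mathfrak b)\,(1+o(1))$, where $G$ is the cubic reduced energy \eqref{gi}; note in particular that with the $b$-corrections the error $\EE$ is no longer exponentially small but only $O\bigl(\lambda^{\frac14+\frac1{2\mu}-2\alpha}\bigr)$, driven by the quadratic term $E_2$, so your exponential-smallness claim holds only for the unaugmented ansatz that cannot close the reduction. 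Oddness of $N$ enters \emph{only} through Lemma \ref{grado}: for odd $N$ the unique critical point of $G$ is $0$ with local degree $(-1)^{\frac{N-1}2}\binom{N-1}{\frac{N-1}2}\neq 0$, which survives the $o(1)$ perturbation and yields $\mathfrak b^\lambda\to 0$ annihilating all $c_j$; for even $N$ the critical set of $G$ consists of non-isolated lines (Remark \ref{rem:Npari}) and the argument fails. Your spectral facts, by contrast, are parity-independent, so your proposal offers no mechanism by which odd degree is actually used. A minor additional point: positivity does not follow from ``$\Phi$ is lower order,'' since $\|\Phi\|_{L^\infty}$ can dominate $W_\lambda$ in the exponentially small tails; the paper handles this by working with $f(u)=(u^+)^{2\mu+1}$ from the start.
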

	
	\begin{theorem}
		\label{main2}
		Let $\G\in\mathbf{G}$ and $\mu\geq\frac12$.  Assume that $\G$ has $M\geq2$ vertices $\overline{\vv}_1,\dots,\overline{\vv}_M\in\V$ such that $N_i:=\dg(\overline{\vv}_i)\geq3$ is odd for every $i=1,\dots,M$. Then there exists $\lambda_0:=\lambda_0(\overline{\vv}_1,\dots,\overline{\vv}_M)$ such that, for every $\lambda\geq\lambda_0$, there exists a positive $H^1$ solution $u_\lambda$ of \eqref{nlse} with a single peak at every vertex $\overline{\vv}_1,\dots,\overline{\vv}_{M}$. More precisely, identifying $\overline{\vv}_i$ with $0$ along each edge $e\succ\overline{\vv}_i$  and setting $\ell_{\overline{\vv}_i}:=\min_{e\succ\overline{\vv}_i}|e|/4$ for every $i=1,\dots,M$, as $\lambda\to+\infty$ there holds
		\begin{equation}
		\label{umulti}
		u_\lambda(x)=\lambda^{\frac1{2\mu}}\sum_{i=1}^M \chi_i\Psi_{N_i}\big(\sqrt{\lambda}\,x\big)+\Phi(x),
		\end{equation}
		where $\chi_i:\G\to[0,1]$ is a smooth function satisfying $\chi_i\equiv1$ on the neighbourhood of radius $\ell_{\overline{\vv}_i}$ of $\overline{\vv}_i$ in $\G$ and $\chi_i\equiv 0$ outside the neighbourhood of radius $2\ell_{\overline{\vv}_i}$ of $\overline{\vv}_i$ in $\G$, $\Psi_{N_i}$ is given in \eqref{fi} and $\Phi$ satisfies 
		\[
		\|\Phi\|_\lambda=o\left(\lambda^{\frac14+\frac1{2\mu}}\right)\,.
		\] 
	\end{theorem}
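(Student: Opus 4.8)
The plan is to realize the solutions as critical points of the action functional, obtained through a Ljapunov--Schmidt reduction, viewing Theorem~\ref{main2} as a superposition of $M$ essentially decoupled copies of the single-peak construction of Theorem~\ref{main1}. I would first rescale by $u(x)=\lambda^{\frac1{2\mu}}v(\sqrt\lambda\,x)$, which recasts \eqref{nlse} as the $\lambda=1$ problem on the dilated graph $\G_\lambda:=\sqrt\lambda\,\G$, all of whose edges have length $\sqrt\lambda|e|\to+\infty$; equivalently, one keeps the original variables, the scaled norm $\|\cdot\|_\lambda$, and the action
\[
J_\lambda(u)=\frac12\int_\G\big(|u'|^2+\lambda u^2\big)\dx-\frac1{2\mu+2}\int_\G|u|^{2\mu+2}\dx,
\]
whose critical points in $H^1(\G)$ are exactly the weak solutions of \eqref{nlse}. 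In the rescaled picture a neighbourhood of each concentration vertex $\overline{\vv}_i$ converges as $\lambda\to+\infty$ to the star $\mathcal S_{N_i}$, on which, since $N_i$ is odd, $\Psi_{N_i}=(\phi,\dots,\phi)$ from \eqref{fi} is the unique positive solution.

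The approximate solution is the ansatz $W_\lambda:=\lambda^{\frac1{2\mu}}\sum_{i=1}^M\chi_i\Psi_{N_i}(\sqrt\lambda\,\cdot)$ of \eqref{umulti}. The factor $\tfrac14$ in the definition of $\ell_{\overline{\vv}_i}$ is precisely what forces each $\chi_i$ to be supported in the first half of every edge incident at $\overline{\vv}_i$, so that the supports of distinct $\chi_i$ are pairwise disjoint and never meet the opposite endpoint of an edge. Hence the residual $S(W_\lambda):=-W_\lambda''+\lambda W_\lambda-W_\lambda^{2\mu+1}$ reduces to a sum of one localized contribution per peak, each produced solely by the derivatives of $\chi_i$ acting on the half-soliton in the transition annulus; since $\phi$ decays exponentially, these contributions, together with the peak--peak interactions, are exponentially small in $\sqrt\lambda$ in the relevant dual norm. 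This is what makes the $M$-peak problem decouple, to leading order, into $M$ independent single-peak problems.

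Writing $u=W_\lambda+\Phi$, solving \eqref{nlse} amounts to
\[
L_\lambda\Phi+S(W_\lambda)+\mathcal Q(\Phi)=0,
\]
with $L_\lambda:=-\partial_{xx}+\lambda-(2\mu+1)W_\lambda^{2\mu}$ the linearization at $W_\lambda$ and $\mathcal Q$ the superquadratic remainder, whose $\|\cdot\|_\lambda$-estimates require exactly $\mu\ge\frac12$ to ensure enough smoothness of $t\mapsto|t|^{2\mu}t$. The crucial structural fact is that $\Psi_{N_i}$ is \emph{degenerate}: the kernel of its linearization on $\mathcal S_{N_i}$ is the $(N_i-1)$-dimensional space $\{(c_1\phi',\dots,c_{N_i}\phi'):\sum_jc_j=0\}$ of admissible asymmetric shifts of the half-soliton along the edges. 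Thus $L_\lambda$ carries $\sum_{i}(N_i-1)$ eigenvalues collapsing to $0$, spanned by an approximate kernel $K_\lambda$ assembled from the transplanted modes $\chi_i\phi'$, while the remainder of its spectrum stays uniformly bounded away from $0$. I would turn this into a quantitative estimate showing that $L_\lambda$ is invertible on $K_\lambda^\perp$ with inverse bounded uniformly in $\lambda$, and then solve the infinite-dimensional equation $\Pi_{K_\lambda^\perp}(L_\lambda\Phi+S(W_\lambda)+\mathcal Q(\Phi))=0$ by a contraction in $K_\lambda^\perp$. The exponential smallness of $S(W_\lambda)$ then delivers a unique small solution $\Phi=\Phi_\lambda$ obeying $\|\Phi_\lambda\|_\lambda=o(\lambda^{\frac14+\frac1{2\mu}})$.

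It remains to solve the finite-dimensional equation $\Pi_{K_\lambda}S(W_\lambda+\Phi_\lambda)=0$, which I let depend on the $\sum_i(N_i-1)$ shift parameters $\tau$ encoded in $W_\lambda[\tau]$ and $\Phi_\lambda[\tau]$; this is where the main obstacle lies and where the oddness of the degrees enters decisively. Because $N_i$ is odd, the only symmetric positive profile on $\mathcal S_{N_i}$ is the centred one, so I expect the reduced map $c(\tau)$ to vanish at $\tau=0$ up to corrections that are themselves exponentially small, arising from the reflection of the exponential tails of $\phi$ at the far endpoints of the finite edges. The delicate point is to expand these exponentially small quantities sharply enough to locate a genuine zero of $c$ --- either by a variational argument showing that the reduced energy $\tau\mapsto J_\lambda(W_\lambda[\tau]+\Phi_\lambda[\tau])$ has a critical point near the centred configuration, or by a degree/fixed-point argument once the leading term of $c$ is identified --- since here neither $c$ nor its linearization is controlled by any algebraically large quantity, but only by these competing tail effects. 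Undoing the rescaling and using that $W_\lambda>0$ dominates the small remainder $\Phi_\lambda$ finally yields the positive $H^1$ solution of \eqref{nlse} with the profile \eqref{umulti}.
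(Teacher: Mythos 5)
Your overall architecture (cut-off ansatz with pairwise disjoint supports, exponentially small peak--peak interaction, invertibility of the linearization on the orthogonal complement of the approximate kernel, contraction mapping for $\Phi$) matches the paper's, and that part would go through. The genuine gap is in the finite-dimensional reduction, where your plan rests on two misconceptions. First, you parametrize the reduced problem by ``shift parameters $\tau$'' of the half-solitons along the edges. For odd $N_i$ there is no manifold of shifted solutions on $\mathcal{S}_{N_i}$ (the profiles $\Psi_{N_i}^a$ with $a\ne 0$ are not solutions, and shifting individual edge-components breaks the vertex conditions), so there is no natural family $W_\lambda[\tau]$ to differentiate. The paper instead injects the degrees of freedom directly into the ansatz by adding the kernel modes of Lemma \ref{non}, i.e. it takes $W_\lambda=\sum_i\chi_i\big(\Psi_{N_i,\lambda}+\sum_j b_{ij,\lambda}Z_\lambda^{(j)}\big)$ with $b_{ij,\lambda}=\lambda^{-\alpha}\mathfrak b_{ij}$, and the unknowns of the reduced problem are the coefficients $\mathfrak b_{ij}$.

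Second, and more seriously, you expect the reduced map to be ``controlled only by competing exponential tail effects'' reflected at the far endpoints of the edges, and you plan to expand those. That is the wrong leading term: in the paper the projection of the error onto $\chi_i Z_\lambda^{(j)}$ splits into a cut-off contribution ($E_1$), which is indeed $O(e^{-\sigma\sqrt\lambda})$, and a contribution ($E_2$) from the second-order Taylor expansion of $f(s)=(s^+)^{2\mu+1}$ at $\Psi_\lambda$ evaluated on $\sum_j b_{ij,\lambda}Z_\lambda^{(j)}$, which is \emph{algebraic} in $\lambda$, of size $\lambda^{\frac12+\frac1\mu-2\alpha}$, and equals $-A\lambda^{\frac12+\frac1\mu-2\alpha}\,\partial G_i/\partial\mathfrak b_{ij}(1+o(1))$ with $G_i$ the cubic reduced energy \eqref{gi}. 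Solving the finite-dimensional system then amounts to finding a critical point of a small perturbation of $G_i$, and the oddness of $N_i$ enters exclusively through Lemma \ref{grado}: for odd $N$ the origin is the unique critical point of $G$ and has non-zero local degree $(-1)^{\frac{N-1}{2}}\binom{N-1}{\frac{N-1}{2}}$, whereas for even $N$ the critical set is a union of lines and the argument collapses (Remark \ref{rem:Npari}). Your heuristic ``the only symmetric positive profile is the centred one, so $c(\tau)$ vanishes at $\tau=0$ up to exponentially small corrections'' misses this algebraic obstruction entirely; without introducing the $\mathfrak b_{ij}$ and the cubic $G_i$, there is nothing to adjust in order to kill the Lagrange multipliers $c_{ij}$, and an expansion of exponential tails would not close the argument.
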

	The above results prove, for sufficiently large $\lambda$, existence of one-peaked (Theorem \ref{main1}) and multi-peaked (Theorem \ref{main2}) positive bound states concentrating at vertices with odd degree and being negligible (as $\lambda \to +\infty$) on the rest of the graph. The proof of both theorems is based on a Ljapunov-Schmidt procedure using as model function the solution $\Psi_N$ of \eqref{nlse} on the star graph $\starN$. Note that the notation in Theorem \ref{main2} is consistent with the fact that one can simultaneously identify all the $\overline{\vv}_i$'s with the origin along all edges incident at each of them.  This is obvious if such vertices have no common edge, whereas if two of them share one edge this can still be done by considering an additional vertex of degree $2$ at the middle point of the shared edge. Also the slightly different definition of $\ell_{\overline{\vv}_i}$ with respect to Theorem \ref{main1} is meant to allow the presence of shared edges between the $\overline{\vv}_i$'s.
	
	Note that Theorems \ref{main1}--\ref{main2} do not apply when $ \mu<\frac12$ or the vertices have even degree. The limitation on $\mu$ is technical and unavoidable with our argument, since we need to compute a second order expansion of the function $f(s)=s^{2\mu+1}$ at $s=\Psi_N$. However, our theorems cover the cubic case $\mu=1$, which is usually considered the most relevant one in many physical applications. Conversely, it is not clear to us whether one can recover the results of Theorems \ref{main1}--\ref{main2} concentrating at vertices with even degree. Heuristically, one may expect that this should be true. 
	
	Actually, the approach could be completely different. Let us focus on the simplest case of a graph with one edge and two vertices (i.e. an interval). It is clear that   there exists a solution to  \eqref{nlse} which concentrates at 
	each vertex (i.e.  boundary point) whose main profile is the function $\Psi_1$.   On the other hand it is also well known that there exists a solution  to  \eqref{nlse} which concentrates at a {\em special} point inside the edge (i.e.  the middle point of the interval) whose main profile is  the function $\Psi_1^a$ \eqref{psia} for a {\em special} value of the parameter $a$ (see for example \cite[Section 2.2]{PPVV} and the references therein). Now, we observe that each point inside the edge can be seen as a vertex of degree 2 and the previous result shows that only one of them is a peak of a concentrating solution.  It would be extremely interesting to prove that a similar result holds true for a more general graph with a  vertex of even degree. We believe that  the difference between the odd and the even case   arises in the choice of the ansatz: this is merely a cut-off of
	$\Psi_N$ around the vertex  itself in the odd case, while  in the  even case it should possibly be a global refinement of  $\Psi_N^a$ in order to fit the Kirchhoff conditions  on the whole graph (see \cite[Remark 2.16]{PPVV}). However,  at present the case of even degree is completely open.

	The interest in peaked solutions of \eqref{nlse} on metric graphs is not new. In \cite{ASTbound}, positive bound states with a maximum in the interior of a given edge are identified as solutions of a doubly constrained minimization problem, for every $\mu\in(0,2)$ and for sufficiently large masses (i.e. the $L^2$ norm of the function). In \cite{BMP,KP} a Dirichlet-to-Neumann map argument is developed in the cubic case $\mu=1$ to construct, again for large masses, solutions with maximum points either at vertices of degree 1 or in the interior of any edge. The results of these three papers apply both to compact graphs and to noncompact graphs with finitely many edges. Remarkably, both approaches are able to handle the mass constrained setting. Conversely, in \cite{DGMP} a Ljapunov-Schmidt procedure similar to the one discussed here is used to find solutions concentrating at vertices of degree 1 on compact graphs, for every $\mu>0$ and $\lambda\to+\infty$. Actually, Theorems 1.2--1.3 of \cite{DGMP} are the analogues of Theorems \ref{main1}--\ref{main2} here in the case of vertices of degree 1 and the strategy of the proof is the same. However, we stress that our results here apply to any graph in $\mathbf{G}$ and not only to compact ones. Furthermore, from the technical point of view, the Ljapunov-Schmidt argument for vertices with degree greater than 1 is rather different and technically demanding. This is readily seen observing that linearizing \eqref{nlse} around $\Psi_N$ gives a linear problem that has only the trivial solution when $N=1$, whereas it has nontrivial solutions as soon as $N\geq2$ (see Lemma \ref{non} below).
	
	With respect to the available literature on bound states of \eqref{nlse}, the main novelty of Theorems \ref{main1}--\ref{main2} is that these are the first results exhibiting solutions with maximum points at vertices of degree greater than or equal to 3. To better understand why this is worth noting, we recall that almost all the existence results on general metric graphs derived so far are based on minimization arguments. In particular, major attention has been devoted to minimum problems both for the action functional $J_\lambda:H^1(\G)\to\R$
	\[
	J_\lambda(u):=\frac12\|u'\|_{L^2(\G)}^2+\frac\lambda2\|u\|_{L^2(\G)}^2-\frac1{2\mu+2}\|u\|_{L^{2\mu+2}(\G)}^{2\mu+2}
	\] 
	constrained to the associated Nehari manifold
	\[
	\mathcal{N}_\lambda(\G):=\left\{u\in H^1(\G)\,:\,J_\lambda'(u)u=0\right\},
	\]
	and for the energy functional $E:H^1(\G)\to\R$
	\[
	E(u):=\frac12\|u'\|_{L^2(\G)}^2-\frac1{2\mu+2}\|u\|_{L^{2\mu+2}(\G)}^{2\mu+2}
	\]
	constrained to the space of functions with prescribed mass
	\[
	H_\nu^1(\G):=\left\{u\in H^1(\G)\,:\,\|u\|_{L^2(\G)}^2=\nu\right\}.
	\]
	Thorough investigations have been developed for ground states, i.e. global minimizers of both problems (see e.g. \cite{DDGS,pankov} for the action and \cite{ADST,AST,AST1,ASTcmp,DST20,DT} and references therein for the energy), but local minimizers have been investigated too (see e.g. \cite{ASTbound,PSV}). However, none of these solutions coincides with those in Theorems \ref{main1}--\ref{main2}. 
	
	\begin{corollary}
		\label{cor1}
		Let $u_\lambda$ be as in Theorems \ref{main1}--\ref{main2}. Then $u_\lambda$ is neither a ground state of $J_\lambda$ in $\mathcal{N}_\lambda(\G)$ nor a ground state of $E$ in $H_\nu^1(\G)$ (where $\nu = \|u_\lambda\|_{L^2(\G)}^2$).
	\end{corollary}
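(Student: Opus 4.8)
The plan is to show that, for $\lambda$ large, the value of each functional at $u_\lambda$ lies strictly above the corresponding ground state level, the latter being bounded from above by evaluating the functional on a single, strongly concentrated soliton. First I would determine the leading order of the two functionals along $u_\lambda$. Inserting the expansion \eqref{u1picco} (resp. \eqref{umulti}), rescaling through $y=\sqrt\lambda\,x$ and using the bound $\|\Phi\|_\lambda=o(\lambda^{\frac14+\frac1{2\mu}})$ together with the exponential decay of $\phi$ (which renders the cut-off errors negligible, the edges having length bounded away from $0$), one obtains
\[
J_\lambda(u_\lambda)=\frac S2\,\lambda^{\frac1\mu+\frac12}\,J_1^{\R}(\phi)\,(1+o(1)),\qquad E(u_\lambda)=\frac S2\,\lambda^{\frac1\mu+\frac12}\,E_1^{\R}(\phi)\,(1+o(1)),
\]
where $S=N$ in Theorem \ref{main1} and $S=\sum_{i=1}^M N_i$ in Theorem \ref{main2}, while $J_1^{\R}(\phi)>0$ and $E_1^{\R}(\phi)<0$ denote the action and the energy at $\lambda=1$ of the soliton $\phi$ on the line $\R$. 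The factor $S/2$ reflects that the model profile $\Psi_{N}$ in \eqref{fi} is made of $N$ half-solitons, each carrying one half of the corresponding line quantity; since every degree involved is $\geq 3$, in both cases $S/2\geq\frac32>1$.

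Next I would build the competitor, exploiting the fact --- valid on every $\G\in\mathbf{G}$, compact graphs included --- that a rescaled soliton has width $\sim\lambda^{-1/2}\to0$, so that for $\lambda$ large it can be placed inside a single fixed edge, away from all vertices, where it automatically satisfies the Kirchhoff conditions and sees the graph as a copy of $\R$ up to exponentially small corrections. For the action I take $\lambda^{\frac1{2\mu}}\phi(\sqrt\lambda\,\cdot)$ supported in one edge and projected onto $\mathcal{N}_\lambda(\G)$, whose action is $\lambda^{\frac1\mu+\frac12}J_1^{\R}(\phi)(1+o(1))$; hence
\[
\inf_{\mathcal{N}_\lambda(\G)}J_\lambda\leq \lambda^{\frac1\mu+\frac12}J_1^{\R}(\phi)\,(1+o(1))<J_\lambda(u_\lambda)
\]
for $\lambda$ large, because $S/2>1$ and $J_1^{\R}(\phi)>0$. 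This already rules out $u_\lambda$ being a ground state of $J_\lambda$ on $\mathcal{N}_\lambda(\G)$, for every $\mu\geq\frac12$.

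For the energy, set $\nu=\|u_\lambda\|_{L^2(\G)}^2=\frac S2\lambda^{\frac1\mu-\frac12}\|\phi\|_{L^2(\R)}^2(1+o(1))$. If $\mu<2$ I use as competitor the line soliton of mass $\nu$, namely the soliton of frequency $\omega=(S/2)^{\frac{2\mu}{2-\mu}}\lambda$, again inserted inside an edge; its energy equals $\mathcal{E}_\R(\nu)=(S/2)^{\frac{2+\mu}{2-\mu}}\lambda^{\frac1\mu+\frac12}E_1^{\R}(\phi)(1+o(1))$, so that
\[
\inf_{H_\nu^1(\G)}E\leq \mathcal{E}_\R(\nu)\,(1+o(1))<E(u_\lambda)
\]
for $\lambda$ large. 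Indeed, since $E_1^{\R}(\phi)<0$ and $S/2>1$, the inequality reduces to the elementary fact $(S/2)^{\frac{2+\mu}{2-\mu}}>S/2$, the exponent exceeding $1$ precisely when $0<\mu<2$; conceptually, splitting the mass into $N\geq3$ half-solitons at a vertex is strictly less efficient than concentrating it into a single higher-frequency soliton. When $\mu\geq2$ the mass $\nu\geq\frac32\|\phi\|_{L^2(\R)}^2$ exceeds every relevant critical threshold, so that $E$ is unbounded below on $H_\nu^1(\G)$ and no ground state exists, making the claim automatic.

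The delicate points are not the soliton bookkeeping but two uniformity issues. First, one must control the cut-off and the remainder $\Phi$ sharply enough to pin down the exact leading coefficient $S/2$ of each functional; the bound on $\|\Phi\|_\lambda$ provided by the main theorems is precisely what guarantees that $\Phi$ enters only at lower order. Second, and more conceptually, the competitor argument has to be made to work on arbitrary graphs in $\mathbf{G}$ rather than only on noncompact ones: this is where the vanishing width of the concentrated soliton as $\lambda\to+\infty$ is essential, since it allows one to fit a single bump inside one edge of a compact graph as well, thereby bounding the global ground state level by the line level in every case.
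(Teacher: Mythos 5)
Your proposal is correct and follows essentially the same route as the paper: the same asymptotic expansions of $J_\lambda(u_\lambda)$, $E(u_\lambda)$ and of the mass, the same edge-supported soliton competitors (made admissible on compact graphs by their vanishing width), and the same case distinction $\mu<2$, $\mu=2$, $\mu>2$ for the energy, with the inequality $(S/2)^{\frac{2+\mu}{2-\mu}}>S/2$ doing the work in the subcritical case. The only imprecision is in the supercritical case $\mu>2$, where the mass $\nu\sim\lambda^{\frac1\mu-\frac12}\frac S2\|\phi\|_{L^2(\R)}^2$ actually vanishes as $\lambda\to+\infty$ rather than exceeding $\frac32\|\phi\|_{L^2(\R)}^2$; the conclusion nevertheless stands because $\inf_{u\in H_\nu^1(\G)}E(u)=-\infty$ for every $\nu>0$ when $\mu>2$, which is exactly the argument the paper invokes.
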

	The proof of Corollary \ref{cor1}, which is given for the sake of completeness in Section \ref{sec:cor}, is actually a straightforward consequence of the fact that, by \eqref{u1picco}, \eqref{umulti}, one can compute explicitly the action, the energy and the mass of $u_\lambda$, and a direct comparison with the asymptotic behaviour of the ground state levels shows that $u_\lambda$ is always a solution with action/energy strictly larger than that of ground states. This is no surprise, as ground states are known to concentrate at vertices of degree 1 (when available) or in the interior of the edges (see e.g. \cite{BMP,DGMP}).
	
	Hence, Theorems \ref{main1}--\ref{main2} provide genuinely new existence and multiplicity results. As for the mass of these solutions, observe that, if $u_\lambda$ is as in \eqref{u1picco}, then
	\begin{equation}
	\label{massa1}
	\|u_\lambda\|_{L^2(\G)}^2=\lambda^{\frac1\mu-\frac12}\left(\frac N2\|\phi\|_{L^2(\R)}^2+o(1)\right)\,,
	\end{equation}
	whereas if $u_\lambda$ is as \eqref{umulti}
	\begin{equation}
	\label{massamulti}
	\|u_\lambda\|_{L^2(\G)}^2=\lambda^{\frac1\mu-\frac12}\left(\sum_{i=1}^M\frac {N_i}2\|\phi\|_{L^2(\R)}^2+o(1)\right).
	\end{equation}
	In particular, these are bound states with diverging masses in the $L^2$-subcritical regime $\mu<2$, with masses strictly greater than $\|\phi\|_{L^2(\R)}^2$ at the $L^2$-critical power $\mu=2$, and with vanishing masses in the $L^2$-supercritical regime $\mu>2$. This is of particular interest both for $\mu=2$, since in the critical regime it is usually difficult to find solutions with masses larger than $\|\phi\|_{L^2(\R)}^2$ (see e.g. \cite{ASTcmp,PSV}), and for $\mu>2$, as very few results are available at present in the $L^2$-supercritical case.
	
	\begin{figure}[t]
		\centering
		\subfloat[][ ]{\includegraphics[width=0.35\columnwidth]{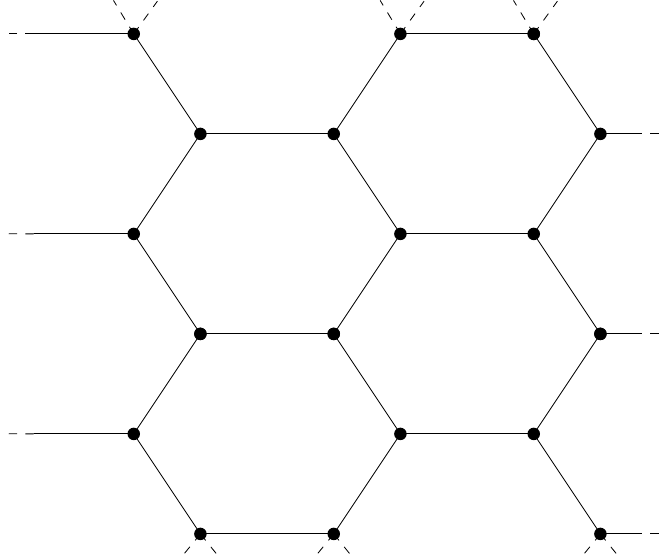}}
		\qquad\qquad
		\subfloat[][ ]{\begin{tikzpicture}[xscale= .8,yscale=.8]
			\begin{scope}[xshift=17em,grow cyclic,shape=circle, thick,
			level distance=2.7em,
			cap=round]
			\tikzstyle{level 1}=[rotate=-90,sibling angle=120]
			\tikzstyle{level 2}=[sibling angle=85]
			\tikzstyle{level 3}=[sibling angle=57]
			\tikzstyle{level 4}=[sibling angle=60]
			\tikzstyle{edge from parent}=[draw]
			\node at (0,0) [nodo] {} child  foreach \A in {1,2,3}
			{ node [nodo]
				{} child  foreach \B in {1,2}
				{ node [nodo] {} child   foreach \C in {1,2}
					{ node [nodo] {} child [dashed,level distance=1.5em]  foreach \C in {1,2} }
				}
			};
			\end{scope}
			\end{tikzpicture}}
		\caption{Examples of infinite periodic graphs (A) and infinite trees (B) with vertices of odd degree greater than 1.}
		\label{fig:inf}
	\end{figure}

	To conclude, we further observe that the results of this paper apply to graphs with countably many edges. As so, Theorem \ref{main2} has evident consequences on the set of bound states of \eqref{nlse} on graphs of this type, such as infinite periodic graphs (Figure \ref{fig:inf}(A)) and infinite trees (Figure \ref{fig:inf}(B)). In particular, graphs like these admit at least countably many $H^1$ positive solutions of \eqref{nlse}, each one with an arbitrary number of peaks located at any given subset of vertices with odd degree. Furthermore, in view of \eqref{massamulti}, considering a sufficiently large number of vertices one obtains bound states with arbitrarily large mass, and this is remarkably true independently of $\mu\geq\frac12$.
	
	\smallskip
	The remainder of the paper is organized as follows. Section \ref{sec:prel} collects some preliminaries. Sections \ref{sec:phi}--\ref{sec:bi}--\ref{sec:G} provide the proof of Theorem \ref{main1}, with the reduction to a finite dimensional problem (Section \ref{sec:phi}), its formulation in term of a reduced energy (Section \ref{sec:bi}) and the analysis of the critical points of such energy (Section \ref{sec:G}). Finally, Section \ref{sec:multi} discusses the proof of Theorem \ref{main2} and Section \ref{sec:cor} that of Corollary \ref{cor1}.
	
	\medskip
	{\em Notation.} In the following we will write $f\lesssim g$ or $f=\mathcal O(g)$ in place of $|f|\le C |g| $  for some positive constant $C$ independent of $\lambda$ and $f\sim g$  in place of $f= g+o(g)$, whenever possible. Furthermore, except for Section \ref{sec:multi}, we will always write $\Psi$ in place of $\Psi_N$.

	\section{Preliminaries}
	\label{sec:prel}
	
	We begin  by introducing the basic idea to construct the solutions to \eqref{nlse} we are looking for and by collecting some related preliminary results.
	
	Given a vertex $\overline\vv$ of $\G$ with odd degree greater than or equal to 3, our aim is to find, for suitable values of the parameter $\lambda$, solutions $u_\lambda$ to \eqref{nlse} in the form
	\begin{equation}
	\label{eq:ansatz}
	u_\lambda:=W_\lambda+\Phi\,,
	\end{equation}
	where $W_\lambda$ concentrates at $\overline\vv$ and $\Phi$ is a smaller order term as $\lambda\to+\infty$. To do this, we first identify a good candidate for the principal part $W_\lambda$ and then derive the correction term $\Phi$ with a Ljapunov-Schmidt procedure.
	
	To define $W_\lambda$, we start with the unique symmetric solution $\Psi\in H^1(\starN)$ to \eqref{main} on the $N$--star graph $\starN$, i.e. $\Psi = (\phi,\dots,\phi)$, where $\phi\in H^1(\R^+)$ is the half--soliton on $\R^+$, and we characterize the set of solutions of the linearization of \eqref{main} at $\Psi$.
	
	\begin{lemma}
		\label{non}
		For every $N\geq2$, the set of solutions to 
		\begin{equation}
		\label{linear}
		\begin{cases} -Z''+  Z = (2\mu+1) \Psi ^{2 \mu}   Z, & Z = (z_1,\dots,z_N) \in H^1(\starN) \\
		\sum_{i = 1}^N z_i'(0)=0 &
		\end{cases}
		\end{equation}
		is given by the $(N-1)$--dimensional space
		\[
		K:=\text{\normalfont span}\left\{Z^{(j)}=\left(Z_1^{(j)},\dots, Z_N^{(j)}\right)\,:\,j=1,\dots,N-1\right\}\,,
		\]
		where, for every $j=1,...,N-1$,
		\begin{equation}
		\label{zj}
		Z_i^{(j)}(x)=\phi'(x)e^j\qquad \forall x\in\R^+,\,i=1,\dots,N,
		\end{equation}
		and
		\begin{equation}
		\label{ej} 
		e^j=\left(\begin{matrix}e^j_1\\ e^j_2 \\ \vdots \\ e^j_N\end{matrix}\right),\quad\hbox{ with }  e^j_1+...+e^j_N=0\quad\forall j \quad \hbox{and}\quad \ e^j\cdot e^k =0\text{ if } j\ne k.
		\end{equation}
	\end{lemma}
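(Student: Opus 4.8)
The plan is to exploit the fact that the linear system \eqref{linear} decouples, on each of the $N$ half-lines, into copies of the same scalar second-order ODE, the coupling between the edges entering only through the two vertex conditions (continuity, which is built into membership in $H^1(\starN)$, and Kirchhoff).

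First I would solve the scalar problem on a single half-line. Differentiating the half-soliton equation $-\phi''+\phi=\phi^{2\mu+1}$ shows that $\phi'$ satisfies
\[
-(\phi')''+\phi'=(2\mu+1)\phi^{2\mu}\phi'\qquad\text{on }\R^+,
\]
so each component of $Z$ solves the scalar equation $-z''+z=(2\mu+1)\phi^{2\mu}z$. Being a second-order linear ODE, its solution space (without boundary conditions) is two-dimensional, and since the potential $(2\mu+1)\phi^{2\mu}$ decays exponentially to $0$ at $+\infty$, the equation is asymptotic there to $-z''+z=0$; hence exactly a one-dimensional subspace of solutions decays at $+\infty$ while the complementary ones grow like $e^x$. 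As $\phi'$ decays exponentially, it spans the decaying subspace, and I would conclude that the only $H^1(\R^+)$ solutions are the multiples of $\phi'$. Therefore every solution of \eqref{linear} has the form $z_i=c_i\phi'$ for some constants $c_1,\dots,c_N$.

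Next I would impose the vertex conditions on $Z=(c_1\phi',\dots,c_N\phi')$. Because $\phi$ is even we have $\phi'(0)=0$, so $z_i(0)=c_i\phi'(0)=0$ for every $i$ and the continuity condition $z_i(0)=z_j(0)$ encoded in $H^1(\starN)$ holds automatically for any choice of the $c_i$. For Kirchhoff I would use $z_i'(0)=c_i\phi''(0)$ and compute $\phi''(0)$ directly from the equation: evaluating $-\phi''+\phi=\phi^{2\mu+1}$ at $0$ and using $\phi(0)=(\mu+1)^{1/(2\mu)}$, so that $\phi(0)^{2\mu+1}=(\mu+1)\phi(0)$, yields $\phi''(0)=-\mu\,\phi(0)\neq0$. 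Thus $\sum_{i=1}^N z_i'(0)=\phi''(0)\sum_{i=1}^N c_i=0$ is equivalent to $\sum_{i=1}^N c_i=0$.

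Finally I would identify $K$ with the image, under the injective map $(c_1,\dots,c_N)\mapsto(c_1\phi',\dots,c_N\phi')$, of the hyperplane $\{(c_1,\dots,c_N):\sum_i c_i=0\}$, which is $(N-1)$-dimensional. Choosing any orthogonal basis $e^1,\dots,e^{N-1}$ of this hyperplane (so that $\sum_i e^j_i=0$ and $e^j\cdot e^k=0$ for $j\neq k$) and setting $Z^{(j)}=\phi'\,e^j$ gives the stated basis. The only genuinely nontrivial ingredient is the one-dimensionality of the space of decaying solutions of the scalar linearized equation, i.e.\ the nondegeneracy of the half-soliton $\phi'$ on $\R^+$; this is where I would expect to spend the most care (it follows from the classical nondegeneracy of the soliton together with the asymptotic behaviour of the ODE at $+\infty$), while the vertex-condition computation and the dimension count are routine.
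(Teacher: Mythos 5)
Your proof is correct and follows essentially the same route as the paper: reduce to the scalar linearized equation on each half-line, show that the only $H^1(\R^+)$ solutions are the multiples of $\phi'$, observe that continuity at the vertex is automatic because $\phi'(0)=0$, and let the Kirchhoff condition force $\sum_{i}c_i=0$, giving the $(N-1)$-dimensional space spanned by the $e^j$. The only (immaterial) difference is how the one-dimensionality of the decaying solution space is justified: the paper writes the second solution explicitly by reduction of order, $\phi'(x)\bigl(c+d\int_{x_0}^x\phi'(s)^{-2}\,ds\bigr)$, and discards it for failing to be in $L^2$, whereas you invoke the $e^{\pm x}$ asymptotics of the ODE at infinity; both are standard and complete, and your explicit computation $\phi''(0)=-\mu\,\phi(0)\neq0$ is a nice touch the paper leaves implicit.
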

	
	\begin{proof}
		Since $\Psi=(\phi, \dots,\phi)$, we plainly see that on each half-line of $\starN$, the function $\phi'$  solves the differential equation in \eqref{linear}. Hence, by the standard theory of linear ordinary differential equations, the general solution of the first line of \eqref{linear} on the $i$--th half-line is of the form
		\[
		z_i(x)=\phi'(x)\(c_i+d_i\int_{x_0}^x \frac{1}{\phi'(s)^2}ds\)
		\]
		with $c_i, d_i \in \R$.
		Since $\phi'$  decays exponentially as $x\to +\infty$, it follows that $z_i\in L^2(\mathbb R^+)$ if and only if $d_i=0$, in which case
		\[
		z_i (x)= c_i\phi' (x).
		\]
		As a consequence, the function $Z= ( c_1\phi', \dots,  c_N\phi')$ belongs to $H^1(\starN)$, as every component is in $H^1(\R^+)$ and continuity at the vertex is guaranteed for every values of the $c_i$'s because $\phi'(0) = 0$. Therefore $Z$ solves problem \eqref{linear} if and only if the homogeneous Kirchhoff condition is satisfied, namely if and only if 
		\[
		0= \sum_{i=1}^N z_i'(0)=  \sum_{i=1}^N c_i\phi''(0).
		\]
		As $\phi''(0)\not=0$, this yields 
		$c_1+...+c_N=0$. Since the equation $c_1+ \dots + c_N =0$ identifies an $(N-1)$--dimensional subspace of $\R^N$ a basis of which is given e.g. by vectors $e^j$ as in \eqref{ej}, this concludes the proof.
	\end{proof}
	For the sake of convenience, in the following we will choose the vectors $e^j, j=1,...,N-1$, satisfying \eqref{ej} to be
	\begin{equation}
	\label{uj} 
	e^1=\left(\begin{matrix} 1\\ -1\\ 0\\ \vdots\\ 0 \end{matrix} \right) ,\ 
	e^2=\left(\begin{matrix} 1\\ 1\\ -2\\ \vdots\\ 0 \end{matrix} \right) ,\ \dots, \ 
	e^{N-1}=\left(\begin{matrix} 1\\  1\\ 1\\ \vdots\\ -(N-1) \end{matrix} \right) .
	\end{equation}
	
	We are now in position to define $W_\lambda$. Here we denote by $B(a,r)$ the ball of radius $r$ in $\G$ centered at the point $a$. Set $N:=\dg(\overline{\vv})$, $\ell:=\min_{e\succ \overline\vv}|e|/2$ and let $\chi:\G\to[0,1]$ be a smooth cut-off function such that $\chi\equiv1$ on $B(\overline\vv,\ell)$ and $\chi\equiv0$ on $\G\setminus B(\overline\vv, 2\ell)$. We then define $W_\lambda:\G\to\R$ as 
	\[
	W_\lambda(x):=\chi(x)\left(\Psi_\lambda(x)+b_{1,\lambda}Z_\lambda^{(1)}(x)+...+b_{N-1,\lambda}Z_\lambda^{(N-1)}(x)\right)
	\]
	where
	\[
	\Psi_\lambda(x):= \lambda^{\frac{1}{2\mu}}\Psi\big(\sqrt{\lambda}\,x\big)
	\]
	with $\Psi$ the unique symmetric solution to \eqref{main} as above, 
	\[
	Z^{(j)}_\lambda(x):= \lambda^{\frac{1}{2\mu}}Z^{(j)}\big(\sqrt{\lambda}\,x\big)
	\]
	with $Z^{(j)}$ the functions in Lemma \ref{non}, and the real numbers $b_{j,\lambda}$  given by
	\begin{equation}
	\label{bj}
	b_{j,\lambda}=\frac{\mathfrak b_j}{\lambda^\alpha}
	\end{equation}
	for suitably chosen $\mathfrak b_j\in\mathbb R$ and $\alpha>0$. Here, even though in principle $\mathfrak{b}_j$ may still depend on $\lambda$, we decide not to denote explicitly such  dependence since, as will be clear in the next sections, the actual value of the $\mathfrak{b}_j$'s we will consider remains always uniformly bounded  in $\lambda$.
	
	With this definition, to prove Theorem \ref{main1} we need to find $\Phi$ and $\mathfrak{b}_j$, $j=1,\dots,N-1$, so that $u_\lambda$ as in \eqref{eq:ansatz} solves \eqref{nlse}.  Note that, setting $f(u):=(u^+)^{2\mu+1}$ for every $u\in H^1(\G)$,  it is clear that positive solutions of \eqref{nlse} coincide with solutions of 
	\begin{equation}\label{pro}
	\begin{cases}
	-u''+\lambda u=f(u) & \text{on }\G\\
	\sum_{e\succ \vv}u_e'(\vv)=0 &  \text{for every }\vv\in \V.
	\end{cases} 
	\end{equation}
	Since we will consider the limit $\lambda\to+\infty$, with no loss of generality we can assume from the beginning $\lambda>-\lambda_\G$, where 
	\[
	\lambda_\G:=\inf_{v\in H^1(\G)}\frac{\|v'\|_{L^2(\G)}^2}{\|v\|_{L^2(\G)}^2}
	\]
	denotes the bottom of the spectrum of $-d^2/dx^2$ (coupled with homogeneous Kirchhoff conditions) on $\G$. Hence, for every such $\lambda$ we equip the space $H^{1}(\G)$ with
	the following equivalent scalar product
	\[
	\left\langle u,v\right\rangle _{\lambda}=\int_{\G}u'(x)v'(x)dx+\lambda\int_{\G}u(x)v(x)dx
	\]
	and denote by $\|\cdot\|_{\lambda}$ the corresponding norm. Note that, considering the immersion
	
	\[
	i_{\lambda}:\left(H^{1}(\G),\left\langle \;,\;\right\rangle_{\lambda}\right)\rightarrow\left(L^{2}(\G),\left\langle\; ,\;\right\rangle _{L^{2}}\right)
	\]
	and defining as usual its adjoint map
	\[
	i_{\lambda}^{*}:\left(L^{2}(\G),\left\langle\; ,\;\right\rangle _{L^{2}}\right)\rightarrow\left(H^{1}(\G),\left\langle \;,\;\right\rangle _{\lambda}\right)
	\]
	so that 
	\[
	\left\langle i_{\lambda}^{*}(g),v\right\rangle _{\lambda}=\left\langle g,v\right\rangle _{L^{2}}\qquad\forall v\in H^{1}(\G),\; g\in L^2(\G),
	\]
	we  obtain that
	\[
	v=i_{\lambda}^{*}(g)\quad\Longleftrightarrow\quad v\text{ solves }\left\{ \begin{array}{ll}
	-v''+\lambda v=g & \text{on }\G\\
	\sum_{e\succ \vv}v_e'(\vv)=0 & \text{for every }\vv\in V\,.
	\end{array}\right.
	\]
	Therefore, problem \eqref{pro} can be rewritten as
	\begin{equation}\label{pro2}
	u=i_{\lambda}^{*}\(f(u)\),\ u\in H^1(\G)
	\end{equation}
	and to find $u_\lambda$ as in \eqref{eq:ansatz} solving \eqref{nlse} amounts to find $\Phi$, $\mathfrak{b}_j$, $j=1,\dots, N-1$, such that $u_\lambda$ satisfies \eqref{pro2}. Actually, we will further rewrite \eqref{pro2} as follows. For every $\lambda$, we introduce the linear operator $\LL:H^1(\G)\to H^1(\G)$
	\begin{equation}
	\label{lp}
	\LL(v):=v-i^*_\lambda\(f'(W_\lambda)v\),
	\end{equation}
	the nonlinear operator $\NN: H^1(\G)\to H^1(\G)$
	\begin{equation}
	\label{np}
	\NN(v):=i^*_\lambda\(f(W_\lambda+v) -f(W_\lambda)-f'(W_\lambda)v\),
	\end{equation}
	and the error term
	\begin{equation}
	\label{e}
	\EE:= i^*_\lambda\(f(W_\lambda)\)-W_\lambda\,.
	\end{equation}
	Accordingly, \eqref{pro2} is equivalent to 
	\begin{equation}
	\label{pro3}
	\LL(\Phi)=\EE+\NN(\Phi), 
	\end{equation}
	where of course one still needs to identify both $\Phi$ and the coefficients $\mathfrak{b}_j$'s.

	\begin{remark}
		\label{rem:i*}
		Note that, if $\G$ is a noncompact graph, $i_\lambda^*$ is not compact. However, since by definition $W_\lambda$ is compactly supported in a fixed ball centered at $\overline{\vv}$, the operator from $\left(L^2(\G), \langle\,,\,\rangle_{L^2}\right)$ to $\left(H^1(\G), \langle\,,\,\rangle_\lambda\right)$ given by $v\mapsto i_\lambda^*(f'(W_\lambda)v)$  is compact for every given $\lambda$. As a consequence, the operator $\LL$ in \eqref{lp} is a compact perturbation of the identity for every $\lambda$.
	\end{remark}
	
	\begin{remark}
		Since we will need them in the following, we conclude this section collecting here the following elementary inequalities
		\begin{equation}\label{in1}
		\left||a+b|^q-|a|^q\right|\lesssim  a^{q-1}|b|+|b|^q \qquad\forall q>1\,,
		\end{equation}
		\begin{equation}\label{in2}
		\left||a+b|^{p}-a^{p}-pa^{p-1}|b|\right|\lesssim a^{p-2}|b|^2+|b|^{p},\qquad\forall p>2
		\end{equation}
		and
		\begin{equation}\label{in3}
		\left||a+b|^{r}-a^{r}-ra^{r-1}|b|-\frac12r(r-1)a^{r-2}b^2\right|\lesssim a^{r-3}|b|^3+|b|^{r}\qquad\forall r>3\,.
		\end{equation}
	\end{remark}
	
	\section{The finite--dimensional reduction}
	\label{sec:phi}
	In this and in the next two sections we solve problem \eqref{pro3}. Here we start by showing that, once the value of the $\mathfrak{b}_i$'s is fixed, it is possible to find a unique $\Phi$ fulfilling a slightly modified version of \eqref{pro3}. This will reduce the problem to identify the $\mathfrak{b}_j$'s  to solve \eqref{pro3}, a task that will be accomplished in the following two sections.
	
	To deal with the problem for $\Phi$, we introduce for every $\lambda$ the following spaces
	\[
	\begin{split}
	&K _{\lambda}:=\left\{v\in H^1(\G)\,:\,v(x)=\chi(x)\sum\limits_{j=1}^{N-1}c_j Z_\lambda^{(j)}(x)\quad\text{for some }\ c_j\in \mathbb R, \, j=1,\dots,N-1\right\}\\
	&K^\perp_{\lambda}:=\left\{v\in H^1(\G)\,:\, \langle v, \chi Z^{(j)}_\lambda\rangle_\lambda=0,\quad\forall\ j=1,\dots,{N-1}\right\}
	\end{split}
	\]
	and the corresponding projections $\Pi _\lambda:H^1(\G)\to  K _{\lambda}$, $\Pi^\perp_\lambda:H^1(\G)\to  K^\perp_{\lambda}$.  
	
	We can then state the main result of this section.
	
	\begin{proposition}
		\label{resto}
		For every compact subset $C$ in $\mathbb R^{N-1}$, there exists $\lambda_0>0$ (depending on $C$) such that, for every  $(\mathfrak b_1,\dots,\mathfrak b_{N-1})\in C$ and  every  $\lambda>\lambda_0$,
		there exist unique $\Phi\in K^\perp_\lambda$ and coefficients $c_1,\dots, c_{N-1}\in\R$ (depending on $\Phi$) for which 
		\begin{equation}
		\label{lhe}
		\LL(\Phi)=\EE+\NN(\Phi)+\chi\sum_{i=1}^{N-1}c_i  Z^{(i)}_\lambda\,.
		\end{equation}
		Moreover, 
		\begin{equation}
		\label{rate}
		\|\Phi\|_\lambda\lesssim  \lambda^{\frac14+\frac1{2\mu}-2\alpha}\,,
		\end{equation}
		where $\alpha$ is the exponent introduced in \eqref{bj}.
	\end{proposition}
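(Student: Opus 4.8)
The plan is to solve \eqref{lhe} as a fixed-point problem for $\Phi$ inside $K^\perp_\lambda$, treating the coefficients $c_i$ as Lagrange multipliers produced by the complementary projection. Composing \eqref{lhe} with $\Pi^\perp_\lambda$ annihilates the last term and yields the equation
\[
\Pi^\perp_\lambda\LL(\Phi)=\Pi^\perp_\lambda\big(\EE+\NN(\Phi)\big),\qquad \Phi\in K^\perp_\lambda,
\]
while composing with $\Pi_\lambda$ determines the $c_i$ uniquely (the $\chi Z^{(i)}_\lambda$ being a basis of $K_\lambda$). Thus everything reduces to inverting $\Pi^\perp_\lambda\LL$ on $K^\perp_\lambda$ with a bound uniform in $\lambda$, estimating the size of the error $\EE$, controlling the higher-order term $\NN$, and closing a contraction argument.

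First I would establish the key linear step, which I expect to be the main obstacle: there exist $C>0$ and $\lambda_0>0$, independent of $(\mathfrak b_1,\dots,\mathfrak b_{N-1})\in C$, with
\[
\|\Pi^\perp_\lambda\LL(v)\|_\lambda\geq C\|v\|_\lambda\qquad\forall\,v\in K^\perp_\lambda,\ \lambda\geq\lambda_0,
\]
so that $\Pi^\perp_\lambda\LL\colon K^\perp_\lambda\to K^\perp_\lambda$ is an isomorphism with uniformly bounded inverse (closed range and injectivity come from the estimate, surjectivity from the fact that $\LL$ is a compact perturbation of the identity by Remark \ref{rem:i*}, hence Fredholm of index zero). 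The natural route is by contradiction: suppose $v_n\in K^\perp_{\lambda_n}$ with $\|v_n\|_{\lambda_n}=1$, $\lambda_n\to+\infty$, and $\|\Pi^\perp_{\lambda_n}\LL(v_n)\|_{\lambda_n}\to0$; unravelling $\LL$ forces $-v_n''+\lambda_n v_n-f'(W_{\lambda_n})v_n\to0$ modulo $K_{\lambda_n}$. Rescaling around $\overline{\vv}$ by the inverse of the concentration scaling, $\tilde v_n(y):=\lambda_n^{-\frac1{2\mu}}v_n\big(y/\sqrt{\lambda_n}\big)$, the localized profiles converge on each of the $N$ edges issuing from $\overline{\vv}$ to a limit $V$ on the star graph $\starN$ solving the linearized problem \eqref{linear}, since near the vertex $f'(W_{\lambda_n})$ rescales exactly to $(2\mu+1)\Psi^{2\mu}$ while the cut-off and the remote geometry of $\G$ disappear. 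The orthogonality $v_n\in K^\perp_{\lambda_n}$ passes to $V\perp K$ in $H^1(\starN)$, whence $V=0$ by Lemma \ref{non}. The delicate part is then to upgrade this weak vanishing to a contradiction with $\|v_n\|_{\lambda_n}=1$: one must show, using the exponential decay of $\Psi$ and the uniform lower bound on the edge lengths, that no mass of $v_n$ escapes to infinity or concentrates away from $\overline{\vv}$, so that the whole $\|\cdot\|_{\lambda_n}$-norm is captured by the blow-up limit, giving $1\to0$.

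With invertibility in hand, I would estimate $\EE$ and set up the contraction. Where $\chi\equiv1$, $\Psi_\lambda$ solves \eqref{nlse} exactly and $Z^{(j)}_\lambda$ solves the linearized equation, so $-W_\lambda''+\lambda W_\lambda-f(W_\lambda)$ reduces to the Taylor remainder of $f$ at $\Psi_\lambda$ evaluated at $\sum_j b_{j,\lambda}Z^{(j)}_\lambda$; by \eqref{in2} this remainder is controlled by $f''(\Psi_\lambda)\big(\sum_j b_{j,\lambda}Z^{(j)}_\lambda\big)^2$, carrying the factor $\lambda^{-2\alpha}$ from $b_{j,\lambda}=\mathfrak b_j/\lambda^\alpha$ (this is exactly where $\mu\geq\frac12$ enters, guaranteeing the second-order expansion). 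In the annulus $\ell\le d(\cdot,\overline{\vv})\le 2\ell$ where $\chi$ varies, $\Psi_\lambda$, $Z^{(j)}_\lambda$ and their derivatives are exponentially small in $\sqrt{\lambda}$, hence negligible. Evaluating $\|\EE\|_\lambda=\sup_{\|v\|_\lambda=1}\langle f(W_\lambda),v\rangle_{L^2}-\langle W_\lambda,v\rangle_\lambda$ and using that $\|\cdot\|_\lambda$ of concentrated profiles scales like $\lambda^{\frac14+\frac1{2\mu}}$, I obtain $\|\EE\|_\lambda\lesssim\lambda^{\frac14+\frac1{2\mu}-2\alpha}$. Finally, writing \eqref{lhe} as the fixed-point equation $\Phi=T(\Phi):=(\Pi^\perp_\lambda\LL)^{-1}\Pi^\perp_\lambda(\EE+\NN(\Phi))$ on $K^\perp_\lambda$, the superquadratic bound $\|\NN(\Phi)\|_\lambda\lesssim\|\Phi\|_\lambda^2+\|\Phi\|_\lambda^{2\mu+1}$ (from \eqref{in1} and the Sobolev embedding $H^1(\G)\hookrightarrow L^\infty(\G)$) shows that, for $\alpha$ large enough so that $\lambda^{\frac14+\frac1{2\mu}-2\alpha}\to0$, the map $T$ sends the ball $\{\|\Phi\|_\lambda\le R\lambda^{\frac14+\frac1{2\mu}-2\alpha}\}$ into itself and is a contraction for $\lambda$ large and $R$ suitably chosen. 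The Banach fixed-point theorem then produces the unique $\Phi\in K^\perp_\lambda$ satisfying \eqref{lhe} together with the bound \eqref{rate}, and the $c_i$'s are recovered uniquely from the $\Pi_\lambda$-projection of \eqref{lhe}.
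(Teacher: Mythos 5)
Your proposal follows essentially the same route as the paper: a uniform coercivity estimate for $\Pi^\perp_\lambda\LL$ on $K^\perp_\lambda$ proved by contradiction and blow-up to the star graph $\starN$ (with Lemma \ref{non} and the orthogonality to $K$ killing the limit), an estimate $\|\EE\|_\lambda\lesssim\lambda^{\frac14+\frac1{2\mu}-2\alpha}$ obtained by separating the cut-off error from the second-order Taylor remainder of $f$ at $\Psi_\lambda$, and a contraction on the ball of radius $\sim\lambda^{\frac14+\frac1{2\mu}-2\alpha}$ in $K^\perp_\lambda$. Two small corrections are in order: the norm-preserving rescaling is $\tilde v_n=\lambda_n^{1/4}v_n(\cdot/\sqrt{\lambda_n})$, not $\lambda_n^{-1/(2\mu)}v_n(\cdot/\sqrt{\lambda_n})$ (the latter sends $\|\tilde v_n\|_{H^1(\G_n)}\to0$ and trivializes the limit), and in the blow-up step one need not, and should not try to, show that the whole $\|\cdot\|_{\lambda_n}$-norm concentrates at the vertex; the paper instead tests the equation against $v_n$ to obtain $1=\int_\G f'(W_{\lambda_n})v_n^2\,dx+o(1)$, and the exponential localization of $f'(W_{\lambda_n})$ together with the local strong convergence of the blow-up to $v_0\in K\cap K^\perp=\{0\}$ already forces this integral to vanish. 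Finally, the restriction to ``$\alpha$ large enough'' is an artifact of your crude bound $\|\NN(\Phi)\|_\lambda\lesssim\|\Phi\|_\lambda^2+\|\Phi\|_\lambda^{2\mu+1}$; keeping the factor $\lambda^{-1/2}$ from H\"older and the scaling $\|W_\lambda\|_{L^\infty(\G)}\lesssim\lambda^{\frac1{2\mu}}$ in the $L^\infty$--Gagliardo--Nirenberg estimate yields a Lipschitz constant $\lesssim\lambda^{-2\alpha}$ for the fixed-point map, so that every $\alpha>0$ works, as in the paper.
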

	
	To prove Proposition \ref{resto}, we need the following preliminary lemma.
	
	\begin{lemma}
		\label{lem:invL}
		For every compact subset $C$ of $\mathbb R^{N-1}$, there exists $\lambda_0>0$ (depending on $C$) such that, for every  $(\mathfrak b_1,\dots,\mathfrak b_{N-1})\in C$ and  every  $\lambda>\lambda_0$, the linear operator $\Pi_\lambda^\perp\LL: K_\lambda^\perp\to K_\lambda^\perp$ is invertible with continuous inverse.
	\end{lemma}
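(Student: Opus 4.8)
The plan is to present $\Pi_\lambda^\perp\LL$ as a small, $\lambda$-uniform perturbation of an invertible map. By Remark \ref{rem:i*}, for each fixed $\lambda$ the operator $\LL=\mathrm{Id}-i_\lambda^*\bigl(f'(W_\lambda)\,\cdot\,\bigr)$ is a compact perturbation of the identity; since for $v\in K_\lambda^\perp$ one has $\Pi_\lambda^\perp v=v$, the restriction $\Pi_\lambda^\perp\LL\colon K_\lambda^\perp\to K_\lambda^\perp$ equals $\mathrm{Id}_{K_\lambda^\perp}-\Pi_\lambda^\perp i_\lambda^*\bigl(f'(W_\lambda)\,\cdot\,\bigr)$, hence is again a compact perturbation of the identity and therefore Fredholm of index zero. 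Consequently, for each fixed $\lambda$ invertibility is equivalent to injectivity, and it suffices to establish the uniform coercivity estimate
\[
\|\Pi_\lambda^\perp\LL(v)\|_\lambda\ge c\,\|v\|_\lambda\qquad\text{for all }v\in K_\lambda^\perp,
\]
with $c>0$ independent of $\lambda\ge\lambda_0$ and of $(\mathfrak b_1,\dots,\mathfrak b_{N-1})\in C$. This estimate gives injectivity (hence invertibility via the Fredholm alternative) together with the uniform bound $\|(\Pi_\lambda^\perp\LL)^{-1}\|\le c^{-1}$.

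I would prove the coercivity estimate by contradiction and a blow-up centred at $\overline\vv$. Suppose it fails: then there are $\lambda_n\to+\infty$, coefficients $\mathfrak b^{(n)}\in C$ (converging, up to a subsequence, to some $\mathfrak b^*$), and $v_n\in K_{\lambda_n}^\perp$ with $\|v_n\|_{\lambda_n}=1$ such that $g_n:=\Pi_{\lambda_n}^\perp\LL(v_n)$ satisfies $\|g_n\|_{\lambda_n}\to0$. I rescale by the factor that makes $\|\cdot\|_\lambda$ scale-invariant, setting $\hat v_n(y):=\lambda_n^{1/4}v_n(y/\sqrt{\lambda_n})$ on the dilated graph $\G_n:=\sqrt{\lambda_n}\,\G$, so that $\|\hat v_n\|_{H^1(\G_n)}=1$. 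As $n\to\infty$ every edge incident at $\overline\vv$ has rescaled length diverging to $+\infty$, so near $\overline\vv$ the graph $\G_n$ converges to the star graph $\starN$. Since $b_{j,\lambda_n}=\mathfrak b^{(n)}_j/\lambda_n^\alpha\to0$ and the cut-off satisfies $\chi(\cdot/\sqrt{\lambda_n})\to1$ locally, the rescaled potential $f'(W_{\lambda_n})$ turns into $(2\mu+1)\Psi^{2\mu}$ up to an error vanishing uniformly on compact sets, and $\hat v_n$ solves weakly on $\G_n$ the linearised equation \eqref{linear} with right-hand side tending to $0$. Standard elliptic bounds then yield, along a subsequence, weak $H^1_{\mathrm{loc}}$ and strong $L^2_{\mathrm{loc}}$ convergence $\hat v_n\to v_\infty\in H^1(\starN)$, where $v_\infty$ solves \eqref{linear}, so by Lemma \ref{non} we have $v_\infty\in K$.

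To identify $v_\infty$ exactly I pass the orthogonality conditions to the limit: the homogeneous relations $\langle v_n,\chi Z^{(j)}_{\lambda_n}\rangle_{\lambda_n}=0$ rescale to $\langle \hat v_n,\chi_n Z^{(j)}\rangle_{H^1(\G_n)}=0$, where $\chi_n:=\chi(\cdot/\sqrt{\lambda_n})\to1$ locally and $Z^{(j)}=(\phi',\dots,\phi')e^j$ decays exponentially; hence in the limit $\langle v_\infty,Z^{(j)}\rangle_{H^1(\starN)}=0$ for every $j$, i.e. $v_\infty\perp K$, which together with $v_\infty\in K$ forces $v_\infty=0$. The contradiction then comes from testing $\LL(v_n)$ against $v_n$: using $v_n\in K_{\lambda_n}^\perp$, the defining property of $i_{\lambda_n}^*$, and $\langle v_n,\Pi_{\lambda_n}h_n\rangle_{\lambda_n}=0$ for $h_n:=i_{\lambda_n}^*(f'(W_{\lambda_n})v_n)$, one obtains
\[
1=\|v_n\|_{\lambda_n}^2=\langle v_n,g_n\rangle_{\lambda_n}+\int_\G f'(W_{\lambda_n})\,v_n^2\,dx,
\]
where $|\langle v_n,g_n\rangle_{\lambda_n}|\le\|g_n\|_{\lambda_n}\to0$, so $\int_\G f'(W_{\lambda_n})v_n^2\,dx\to1$. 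On the other hand, the same change of variables gives $\int_\G f'(W_{\lambda_n})v_n^2\,dx=(2\mu+1)\int_{\G_n}(\hat W_n^+)^{2\mu}\hat v_n^2\,dy$, and since this weight converges to $(2\mu+1)\Psi^{2\mu}$ and decays exponentially, the strong $L^2_{\mathrm{loc}}$ convergence combined with a tail estimate using $\|\hat v_n\|_{L^2(\G_n)}\le1$ forces this integral to converge to $(2\mu+1)\int_{\starN}\Psi^{2\mu}v_\infty^2=0$, contradicting the limit $1$.

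The main obstacle will be making the blow-up rigorous: controlling the local convergence of the dilated graphs $\G_n$ to $\starN$ and carrying out the passage to the limit both in the weak formulation and in the orthogonality conditions. Crucially, one must exploit the exponential localisation of the weight $f'(W_{\lambda_n})$ to upgrade the a priori only $L^2_{\mathrm{loc}}$ information into a genuine contradiction, thereby circumventing the loss of compactness of $i_\lambda^*$ on noncompact graphs recorded in Remark \ref{rem:i*}. Checking that all constants are uniform over the compact set $C$ is a further, routine but necessary, technical point.
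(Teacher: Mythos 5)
Your overall strategy coincides with the paper's: reduce to the uniform coercivity estimate $\|\Pi_\lambda^\perp\LL(v)\|_\lambda\ge c\|v\|_\lambda$ via the Fredholm alternative (using Remark \ref{rem:i*}), argue by contradiction, blow up at $\overline\vv$, identify the weak limit $v_\infty$ as an element of $K\cap K^\perp=\{0\}$, and derive the contradiction from the identity $1=\|v_n\|_{\lambda_n}^2=\langle v_n,g_n\rangle_{\lambda_n}+\int_\G f'(W_{\lambda_n})v_n^2\,dx$. That identity is correct and is essentially the paper's Step 2 computation (there the term $\langle k_n,v_n\rangle_{\lambda_n}$ drops out by orthogonality).

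There is, however, a genuine gap in the step where you claim that $\hat v_n$ ``solves weakly the linearised equation \eqref{linear} with right-hand side tending to $0$.'' Your contradiction hypothesis only controls $g_n=\Pi_{\lambda_n}^\perp\LL(v_n)$; the full operator satisfies $\LL(v_n)=g_n+\Pi_{\lambda_n}\LL(v_n)$, and the component $\Pi_{\lambda_n}\LL(v_n)\in K_{\lambda_n}$ (the term $-k_n$ in the paper) appears on the right-hand side of the weak formulation for $\hat v_n$. A priori this component is only \emph{bounded} in $\|\cdot\|_{\lambda_n}$ (since $\Pi_{\lambda_n}$ is a contraction and $\|\LL(v_n)\|_{\lambda_n}\lesssim 1$), not vanishing, so without further work you cannot conclude that the limit $v_\infty$ solves \eqref{linear}, hence cannot conclude $v_\infty\in K$ and $v_\infty=0$; the limit could a priori be a nontrivial solution of the linearised equation with a nonzero forcing term lying in the span of the $Z^{(j)}$'s. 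This is precisely what the paper's Step~1 is for: by testing the equation against $\chi Z_{\lambda_n}^{(j)}$ and estimating the resulting quantities $A_n$ and $B_n$ (using $\mu\ge\tfrac12$, the compactness of $C$ and the scaling $b_{i,\lambda_n}=\lambda_n^{-\alpha}\mathfrak b_i$), one shows the coefficients $c_{j,n}$ of $k_n$ decay fast enough that $\|k_n\|_{\lambda_n}\to0$. You must either reproduce such an estimate, or close the gap differently, e.g.\ by passing to the limit with the bounded sequence $\Pi_{\lambda_n}\LL(v_n)$ (whose rescaling converges, up to a subsequence, to some $\sum_j d_jZ^{(j)}$) and then testing the limit equation against each $Z^{(j)}$ — which solves the homogeneous problem — to force $d_j=0$. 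As written, the proposal omits this step entirely, and it is the technical heart of the lemma.
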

	
	\begin{proof}  
		Note first that to prove the claim it is enough to show that there exist $c>0$ and $\lambda_0>0$ such that, for every $\lambda>\lambda_0$ and every $v\in K^\perp_\lambda$, there holds
		\begin{equation}
		\label{inj}
		\|\Pi^\perp_\lambda \LL(v)\|_\lambda\ge c\|v\|_\lambda.
		\end{equation}
		Indeed, as $v \in K^\perp_\lambda$,
		\[
		\Pi^\perp_\lambda \LL(v) = v - \Pi^\perp_\lambda i_{\lambda}^*(f'(W_\lambda)v)
		\]
		which is a compact perturbation of the identity as pointed out in Remark \ref{rem:i*}.
		
		To prove \eqref{inj}, we argue similarly to \cite[Lemma 3.1]{DGMP}. Suppose for contradiction that \eqref{inj} is false, namely that there exist sequences  $\lambda_n\to+\infty$ and $v_n\in K^\perp_{\lambda_n}$ such that, as $n \to \infty$,
		\begin{equation}
		\label{assurdo}
		\|v_n\|_{\lambda_n}=1, \qquad \|\Pi^\perp_{\lambda_n} \LL(v_n)\|_{\lambda_n} \to 0.
		\end{equation}
		We then write
		\[
		i_{\lambda_n}^*(f'(W_{\lambda_n})v_n) = v_n  -  \LL(v_n) = v_n - \Pi_{\lambda_n} \LL(v_n) -\Pi^\perp_{\lambda_n} \LL(v_n)=: v_n +k_n - h_n,
		\]
		with $k_n \in K_{\lambda_n}$ and $\|h_n\|_{\lambda_n} \to 0$ by assumption. By definition of $i_{\lambda_n}^*$, on every edge of $\G$ this reads
		\begin{equation}
		\label{eqdiff}
		-( v_n +k_n - h_n)'' + \lambda_n ( v_n + k_n - h_n)  = f'(W_{\lambda_n})v_n,
		\end{equation}
		together with homogeneous Kirchhoff conditions at every vertex of $\G$. The rest of the proof is divided in two steps.
		
		\smallskip
		{\em Step 1.} We claim that $\|k_n \|_{\lambda_n} \to 0$. Indeed, for some numbers $c_{i, n}$,
		\[
		k_n (x)=	\chi(x)\sum_{i=1}^{N-1} c_{i, n} Z_{\lambda_n}^{(i)}(x),\qquad Z_{\lambda_n}^{(i)}(x)=\lambda_n^\frac1{2\mu}Z^{(i)}\big(\sqrt{\lambda_n}\,x\big).
		\]
		For fixed $j=1,\dots, N-1$, we multiply \eqref{eqdiff} by $\chi  Z_{\lambda_n}^{(j)}$ and, recalling that $v_n- h_n\in K^\perp_{\lambda_n}$, we obtain
		\begin{equation}
		\label{AB}
		A_n:= \int_\G f'(W_{\lambda_n})v_n\chi  Z_{\lambda_n}^{(j)} \,dx = \sum_{i=1}^{N-1} c_{i,n} \int_\G\big(\chi Z_{\lambda_n}^{(i)}\big)'\big(\chi Z_{\lambda_n}^{(j)}\big)'+\lambda_n\big( \chi Z_{\lambda_n}^{(i)}\big)\big(\chi Z_{\lambda_n}^{(j)}\big)\,dx =: B_n\,.
		\end{equation}
		Now, as $ Z_{\lambda_n}^{(j)}$ and $ Z_{\lambda_n}^{(i)} $ are pointwise orthogonal by \eqref{zj}, \eqref{ej},
		\begin{equation}
		\label{Bn}
		B_n = 
		c_{j,n} \int_{\G \cap B(\overline\vv,2\ell)}\ |\chi|^2  \big|\big(Z_{\lambda_n}^{(j)}\big) ' \big|^2 + \lambda_n |\chi|^2 \big|Z_{\lambda_n}^{(j)}\big |^2+ 2\chi\chi' \big(Z_{\lambda_n}^{(j)}\big) ' Z_{\lambda_n}^{(j)} +|\chi'|^2 \big|Z_{\lambda_n}^{(j)}\big|^2\,dx.
		\end{equation}
		As $n\to \infty$, a direct computation shows that
		\begin{equation}
		\label{est1}
		\begin{split}
		\int_{\G \cap B(\overline\vv,2\ell)} & |\chi|^2  \big|\big(Z_{\lambda_n}^{(j)}\big) ' \big|^2 \dx = \lambda_n^{\frac1\mu +1}  \int_{\G \cap B(\overline{\vv},2\ell)}\ |\chi(x)|^2 \big|\big(Z^{(j)}\big)'\big(\sqrt{\lambda_n}\,x\big)\big|^2 \dx \\
		&= \lambda_n^{\frac1\mu +\frac12}  \int_{\starN \cap B(0,2\ell\sqrt{\lambda_n})} |\chi(x/\sqrt{\lambda_n}) |^2 \big|\big(Z^{(j)}\big)'(x)\big|^2 \dx
		\sim a \lambda_n^{\frac1\mu +\frac12}
		\end{split}
		\end{equation}
		for some constant $a>0$. Note that in the previous computation we tacitly interpreted the term $\chi \big(Z^{(j)}\big)'$ as defined both on a compact subset of $\G$ and on a compact subset of $\starN$. This is clearly unambiguous, since the support of the cut-off function $\chi$ is a finite symmetric star graph with $N$ edges centered at $\overline\vv$ (recall that here $N=\dg(\overline{\vv})$). The identification of such subsets of $\G$ and $\starN$ will be frequently used also in the rest of the proof. 
		
		Arguing as in \eqref{est1} one easily sees that the second term in \eqref{Bn} has the same asymptotic behavior, while the last two terms are $o\left( \lambda_n^{\frac1\mu +\frac12}\right)$. Therefore,
		\begin{equation}
		\label{B}
		B_n \sim ac_{j,n} \lambda_n^{\frac1\mu +\frac12}\qquad\text{as }n\to+\infty\,.
		\end{equation}
		Let us now focus on $A_n$. Since each $Z^{(j)}$ solves \eqref{linear}, the functions $Z_{\lambda_n}^{(j)}$ solve 
		\[
		- \big(Z_{\lambda_n}^{(j)}\big)''+\lambda_n Z_{\lambda_n}^{(j)}=f'(\Psi_{\lambda_n}) Z_{\lambda_n}^{(j)}
		\]
		on every edge of $\starN$, coupled with homogenous Kirchhoff conditions at the vertex. Multiplying by $\chi v_n$, thinking of the resulting equation as defined on $\G$ (due to the cut-off $\chi$) and integrating on $\G$ yields
		\begin{align*}
		\int_\G f'(\Psi_{\lambda_n})Z_{\lambda_n}^{(j)} \chi v_n\,dx&=\int_\G\big(Z_{\lambda_n}^{(j)}\big)'(\chi v_n)'+\lambda_n Z_{\lambda_n}^{(j)} \chi v_n\,dx\\
		&=\int_\G \big(\chi Z_{\lambda_n}^{(j)}\big)'v_n'+\lambda_n Z_{\lambda_n}^{(j)}\chi v_n\,dx+\int_\G\chi'\Big(\big(Z_{\lambda_n}^{(j)})' v_n-v_n'Z_{\lambda_n}^{(j)}\Big)\,dx.
		\end{align*}
		Notice that the first integral in the right hand side vanishes since $v_n \in K_{\lambda_n}^\perp$, while 
		\begin{align*}
		\left| \int_\G\chi'\Big(\big(Z_{\lambda_n}^{(j)})' v_n-v_n'Z_{\lambda_n}^{(j)}\Big)\,dx\right | &\le \|v_n\|_{L^2(\G)} \left(\int_\G |\chi'|^2 \big|\big(Z_{\lambda_n}^{(j)}\big)'\big|^2\,dx\right)^{\frac12} + \|v_n'\|_{L^2(\G)} \left(\int_\G |\chi'|^2 \big|Z_{\lambda_n}^{(j)}\big|^2\,dx\right)^{\frac12} \\
		& \lesssim \lambda_n^{\frac1{2\mu} +\frac14} \left(\int_{\ell\sqrt{\lambda_n}}^{2\ell\sqrt{\lambda_n}} |\phi''|^2\,dx\right)^{\frac12} +
		\lambda_n^{\frac1{2\mu} -\frac14} \left(\int_{\ell\sqrt{\lambda_n}}^{2\ell\sqrt{\lambda_n}} |\phi'|^2\,dx\right) ^{\frac12}\,,
		\end{align*}
		where $\phi$ is the soliton on $\R$ as in \eqref{phi}. Since $\phi'$ and $\phi''$ decay exponentially as $x\to+\infty$, there exists $\beta>0$ such that as $n\to+\infty$
		\[
		\left |\int_\G f'(\Psi_{\lambda_n})Z_{\lambda_n}^{(j)} \chi v_n \,dx\right | \lesssim e^{-\beta \sqrt{\lambda_n}}\,.
		\]
		Hence, recalling the definition of $A_n$ and \eqref{assurdo},
		\begin{equation} 
		\label{An}
		\begin{split}
		A_n &=\int_\G \(f'(W_{\lambda_n})-f'(\Psi_{\lambda_n})\)v_n\chi Z_{\lambda_n}^{(j)}\,dx+\int_\G f'(\Psi_{\lambda_n}) Z_{\lambda_n}^{(j)}\chi v_n\,dx\\
		& = \int_{\G\cap B(\overline\vv,2\ell)} \(f'(W_{\lambda_n})-f'(\Psi_{\lambda_n})\)v_n\chi Z_{\lambda_n}^{(j)}\,dx + O\left(e^{-\beta \sqrt{\lambda_n}}\right)\\
		&\le \lambda_n^{-1/2} \left\| \(f'(W_{\lambda_n})-f'(\Psi_{\lambda_n})\) \chi Z_{\lambda_n}^{(j)} \right\|_{L^2(\G\cap B(\overline\vv,2\ell))} + O\left(e^{-\beta \sqrt{\lambda_n}}\right)\,.
		\end{split}
		\end{equation}
		Neglecting for a while the last term and recalling that $f(s)=(s^+)^{2\mu+1}$ we see that 
		\begin{equation}
		\label{L2}
		\begin{split}
		&\int_{\G\cap B(\overline\vv,2\ell)}\left|\(f'(W_{\lambda_n})-f'(\Psi_{\lambda_n})\) \chi Z_{\lambda_n}^{(j)} \right|^2\,dx\lesssim\int_{\G\cap B(\overline\vv,2\ell)}\left|W_{\lambda_n}^{2\mu}-\Psi_{\lambda_n}^{2\mu}\right|^2\left|Z_{\lambda_n}^{(j)}\right|^2\,dx\\
		&\, =\lambda_n^{2+\frac1\mu}\int_{\starN\cap B(0,2\ell)}\left|\chi^{2\mu}(x)\left(\Psi\big(\sqrt{\lambda_n}\,x\big)+\sum_{i=1}^{N-1}b_{i,\lambda_n}Z^{(i)}\big(\sqrt{\lambda_n}\,x\big)\right)^{2\mu}-\Psi^{2\mu}\big(\sqrt{\lambda_n}\,x\big)\right|^2\\
		&\qquad\qquad\qquad\qquad\qquad\qquad\qquad\qquad\qquad\qquad\qquad\qquad\qquad\qquad\qquad\qquad\qquad\left|Z^{(j)}\big(\sqrt{\lambda_n\,x}\big)\right|^2\,dx\\
		&\lesssim \lambda_n^{\frac32+\frac1{\mu}}\int_{\starN}\left|\left(\Psi(x)+\sum_{i=1}^{N-1}b_{i,\lambda_n}Z^{(i)}(x)\right)^{2\mu}-\Psi^{2\mu}(x)\right|^2\left|Z^{(j)}(x)\right|^2\,dx\,.
		\end{split}
		\end{equation}
		Since $Z^{(i)}\in L^\infty(\starN)$ by Lemma \ref{non}, $b_{i,\lambda_n}=\lambda^{-\alpha}\mathfrak{b}_i$ by \eqref{bj}, and $(\mathfrak{b}_1,\dots,\mathfrak{b}_{N-1})\in C$ which is compact by assumption,  the mean value theorem yields the existence of $\theta_n:\starN\to[0,1]$ so that as $n\to+\infty$
		\[
		\begin{split}
		&\int_{\starN}\left|\left(\Psi+\sum_{i=1}^{N-1}b_{i,\lambda_n}Z^{(i)}\right)^{2\mu}-\Psi^{2\mu}\right|^2\left|Z^{(j)}\right|^2\,dx\\
		&\,\lesssim\lambda_n^{-2\alpha}\int_{\starN}\left|\Psi+\theta_n\sum_{i=1}^{N-1}b_{i,\lambda_n}Z^{(i)}\right|^{2(2\mu-1)}\left|\sum_{i=1}^{N-1}\mathfrak{b}_i Z^{(i)}\right|^2\left|Z^{(j)}\right|^2\,dx\\
		&\,\lesssim \lambda_n^{-2\alpha}\int_{\R^+}\left(|\phi|+|\phi'|\right)^{2(2\mu-1)}|\phi'|^4\,dx\sum_{i=1}^{N-1}\mathfrak{b}_i^2\lesssim \lambda_n^{-2\alpha}\,,
		\end{split}
		\]
		the last inequality using also $\mu\geq\frac12$. Coupling with \eqref{An} and \eqref{L2} entails
		\[
		A_n\lesssim \lambda_n^{\frac14+\frac1{2\mu}-\alpha}\quad\text{as }n\to+\infty
		\]
		and combining with \eqref{AB} and \eqref{B} we obtain
		\[
		c_{j,n}\lesssim \lambda ^{-\frac14-\frac1{2\mu}-\alpha}\quad\text{as }n\to+\infty\,.
		\]
		Finally, with the same argument used to compute $B_n$,
		\begin{equation}
		\label{knto0}
		\|k_n\|_{\lambda_n}^2=\sum_{i=1}^{N-1} c_{i,n}^2 \int_\G  \big|\big(\chi Z_{\lambda_n}^{(i)}\big)'\big|^2+\lambda_n \big|\chi Z_{\lambda_n}^{(i)}\big|^2\,dx \sim
		\lambda_n^{\frac1\mu +\frac12}\sum_{i=1}^{N-1} c_{i,n}^2\lesssim \lambda_n^{-2\alpha}\to0
		\end{equation}
		as $n\to+\infty$ since $\alpha > 0$.
		
		\smallskip
		{\em Step 2.} We now go back to  equation \eqref{eqdiff} and we multiply it by $v_n$, obtaining as $n\to+\infty$
		\[
		1 =\|v_n\|_{\lambda_n}^2= -\langle k_n, v_n\rangle_{\lambda_n} +  \langle h_n, v_n\rangle_{\lambda_n} +\int_\G f'(W_{\lambda_n} )v_n^2 \,dx=
		\int_\G f'(W_{\lambda_n} )v_n^2\,dx +o(1),
		\]
		since $ \langle k_n, v_n\rangle_{\lambda_n} = 0$ because $k_n\in K_{\lambda_n}$, $v_n\in K_{\lambda_n}^\perp$, and  $\langle h_n, v_n\rangle_{\lambda_n} \to 0$ by \eqref{assurdo}.
		In view of this, if we prove that
		\begin{equation}
		\label{intto0}
		\int_\G f'(W_{\lambda_n} )v_n^2\,dx \to0
		\end{equation}
		a contradiction arises and the proof is completed. To this aim, we  set 
		\[
		\tilde v_n(x)=\lambda_n^{1/4} v_n\big(x/\sqrt{\lambda_n}\big),\quad \tilde h_n(x)=\lambda_n^{1/4}h_n\big(x/\sqrt{\lambda_n}\big),\quad \tilde k_n(x)=\lambda_n^{1/4} k_n\big(x/\sqrt{\lambda_n}\big)\,.
		\]
		Note that $\tilde v_n, \tilde h_n, \tilde k_n$ are defined on the scaled graph $\G_n : =\sqrt{\lambda_n}\, \G$ and direct computations show that
		\begin{equation}
		\label{vtilde}
		\begin{split}
		\|\tilde v_n\|_{H^1(\G_n)}&\,=\|v_n\|_{\lambda_n}=1\\
		\int_{\G} f'(W_{\lambda_n} )v_n^2dx&\,=\int_{\G_n} \chi^{2\mu}\left(x/\sqrt{\lambda_n}\right)f'\left(\Psi(x)+  \sum_{i=1}^{N-1}b_{i,\lambda_n} Z^{(i)}(x) \right)\tilde v_n^2(x)\,dx
		\end{split}
		\end{equation}
		and 
		\[
		\|\tilde h_n\|_{H^1(\G_n)}=\|h_n\|_{\lambda_n}\to0,\quad \|\tilde k_n\|_{H^1(\G_n)}=\|k_n\|_{\lambda_n}\to0
		\]
		as $n\to+\infty$ by \eqref{assurdo}, \eqref{knto0}. Since, by construction, one can identify $\G_n\cap B(\overline{\vv},2\ell\sqrt{\lambda_n})$ with the compact subset of $\starN$ given by $\starN\cap B(0,2\ell\sqrt{\lambda_n})$, combining \eqref{eqdiff} with the previous formulas shows that, for every compactly supported $\varphi\in H^1(\starN)$, there exists $n$ large enough so that
		\[
		\begin{split}
		\int_{\starN}\tilde v_n'(x)\varphi'(x)+\tilde v_n(x)\varphi(x)-&\,f'\left(\chi\left(x/\sqrt{\lambda_n}\right)\left(\Psi(x)+  \sum_{i=1}^{N-1} b_{i,\lambda_n}Z^{(i)}(x)\right)\)\tilde v_n(x)\varphi(x)\,dx\\
		=&\,\int_{S_N}(\tilde h_n-\tilde k_n)'(x)\varphi'(x)+(\tilde h_n-\tilde k_n)(x)\varphi(x)\,dx=o(1)\,,
		\end{split}
		\]
		where with a slight abuse of notation we are thinking of $\tilde v_n, \tilde h_n, \tilde k_n$ as functions on $\starN\cap B(0,2\ell\sqrt{\lambda_n})$. Hence, $\tilde v_n$ converges weakly in $H^1$ and strongly in $L^q$, for every $q\geq2$, on compact subsets of $\starN$  to a function $v_0$. Arguing as in \cite[Lemma 3.1]{DGMP}, it is easy to see that $v_0 \in H^1(\starN)$ and, letting $n\to+\infty$ in the previous formula, that
		\[
		\int_{\starN}v_0'\varphi'+v_0\varphi-f'(\Psi)v_0\varphi=0, \quad\forall \varphi \in H^1(\starN),
		\]
		namely that $v_0 \in K$, where $K$ is as in Lemma \ref{non}. However, since arguing as above and recalling that $v_n\in K_{\lambda_n}^\perp$ one also has
		\[
		\langle v_0, Z^{(j)}\rangle_{H^1(\starN)}=\lim_{n\to+\infty}\lambda_n^{-\frac14-\frac1{2\mu}}\langle v_n,  \chi Z_{\lambda_n}^{(j)}\rangle_{\lambda_n}=0\qquad\forall j=1,\dots, N-1\,,
		\]
		i.e. $v_0\in K^\perp$, it follows that $v_0\equiv0$ on $\starN$. As a consequence, when $n\to+\infty$
		\[
		\int_{\G_n\cap B(\overline\vv,2\ell\sqrt{\lambda_n})} f'\left(\Psi+  \sum_{i=1}^{N-1}b_{i,\lambda_n}Z^{(i)}\right)\tilde v_n^2\,dx\to 
		\int_{\starN } f'(\Psi  ) v_0^2=0\,,
		\]
		which together with the second line of \eqref{vtilde} implies \eqref{intto0} and concludes the proof.
	\end{proof}
	
	\begin{proof}[Proof of Proposition \ref{resto}]
		We prove the claim with a suitable fixed point argument. The proof is divided in two steps.
		
		\smallskip
		{\em Step 1: estimates on $\EE$}. We begin by showing that as $\lambda\to+\infty$
		\begin{equation}
		\label{estE}
		\|\EE\|_{\lambda}\lesssim  \lambda^{\frac14+\frac1{2\mu}-2\alpha}.
		\end{equation}
		Indeed, by \eqref{e} and the definition of $W_{\lambda}$ and  $i_{\lambda}^*\left(f'(W_{\lambda})\right)$, it follows
		\begin{equation}
		\label{eqE}
		\begin{split}
		-\EE''+\lambda\EE=&\,\chi''\left(\Psi_{\lambda}+\sum_{i=1}^{N-1}b_{i,\lambda}Z_{\lambda}^{i}\right)+2\chi'\left(\Psi_{\lambda}+\sum_{i=1}^{N-1}b_{i,\lambda}Z_{\lambda}^{i}\right)'\\
		&\,+f(W_{\lambda})-\chi\left(f(\Psi_{\lambda})+f'(\Psi_{\lambda})\sum_{i=1}^{N-1}b_{i,\lambda}Z_{\lambda}^{(i)}\right)= E_1+E_2
		\end{split}
		\end{equation}
		where
		\begin{equation}
		\label{e1}
		E_1:=\chi''\bigg(\Psi_{\lambda}+\sum_{i=1}^{N-1}b_{i,\lambda}Z_{\lambda}^{(i)}\bigg)+2\chi'\bigg(\Psi_{\lambda}+\sum_{i=1}^{N-1}b_{i,\lambda}Z_{\lambda}^{(i)}\bigg)'+(\chi^{2\mu+1}-\chi)\bigg(\Psi_{\lambda}+\sum_{i=1}^{N-1}b_{i,\lambda}Z_{\lambda}^{(i)}\bigg)^{2\mu+1} 
		\end{equation}
		and
		\begin{equation}
		\label{e2}
		E_2:=\chi\left[f\(\Psi_{\lambda}+\sum_{i=1}^{N-1}b_{i,\lambda}Z_{\lambda}^{(i)}\)-f\(\Psi_{\lambda}\)-f'\(\Psi_{\lambda}\) \sum_{i=1}^{N-1}b_{i,\lambda} Z_{\lambda}^{(i)}\right]\,.
		\end{equation}
		Multiplying \eqref{eqE} by $\EE$, integrating over $\G$ and using the H\"older inequality we have
		\[
		\begin{split}
		\|\EE\|_{\lambda}^2&\,\lesssim \left(\|E_1\|_{L^2(\G)}+\|E_2\|_{L^2(\G)}\right)\|\EE\|_{L^2(\G)}\\
		&\,\leq \lambda^{-1/2}\left(\|E_1\|_{L^2(\G)}+\|E_2\|_{L^2(\G)}\right)\|\EE\|_{\lambda}
		\end{split}
		\]
		that is 
		\begin{equation}
		\label{E1+E2}
		\|\EE\|_{\lambda}\lesssim \lambda^{-1/2}\left(\|E_1\|_{L^2(\G)}+\|E_2\|_{L^2(\G)}\right)\,.
		\end{equation}
		Now, adapting the argument in the proof of \cite[Proposition 3.2]{DGMP} one easily sees that
		\begin{equation}
		\label{2E}
		\|E_1\|_{L^2(\G)}\lesssim \lambda^{-\tau}\qquad \forall\tau>0\,.
		\end{equation}
		As for $E_2$, by \eqref{in2} one has
		\[
		|E_2|\leq |\chi|\left[\Psi_{\lambda}^{2\mu-1}\left|\sum_{i=1}^{N-1}b_{i,\lambda}Z_{\lambda}^{(i)}\right|^2+\left|\sum_{i=1}^{N-1}b_{i,\lambda}Z_{\lambda}^{(i)}\right|^{2\mu+1}\right]\,,
		\]
		so that by \eqref{bj}
		\[
		\begin{split}
		\int_\G E_2^2\,dx&\,\lesssim \sum_{i=1}^{N-1}|b_{i,\lambda}|^4\int_\G|\Psi_{\lambda}|^{2(2\mu-1)}\left|Z_{\lambda}^{(i)}\right|^4\,dx+\sum_{i=1}^{N-1}b_{i,\lambda}^{2(2\mu+1)}\int_\G\left|Z_{\lambda}^{(i)}\right|^{2(2\mu+1)}\,dx\\
		&\,\lesssim\lambda^{\frac32+\frac1\mu}\sum_{i=1}^{N-1}|b_{i,\lambda}|^4\int_{\R^+}|\phi|^{2(2\mu-1)}|\phi'|^4\,dx+\lambda^{\frac32+\frac1\mu}\sum_{i=1}^{N-1}|b_{i,\lambda}|^{2(2\mu+1)}\int_{\R^+}|\phi'|^{2(2\mu+1)}\,dx\\
		&\,\lesssim \lambda^{\frac32+\frac1\mu-4\alpha}+\lambda^{\frac32+\frac1\mu-2(2\mu+1)\alpha}\lesssim\lambda^{\frac32+\frac1\mu-4\alpha}\qquad\text{as }\lambda\to+\infty\,,
		\end{split}
		\]
		since $(\mathfrak{b}_1,\dots,\mathfrak{b}_{N-1})\in C$ which is compact by assumption and $\mu\geq\frac12$. Coupling with \eqref{E1+E2} and \eqref{2E} yields \eqref{estE}.
		
		\smallskip
		{\em Step 2: the contraction mapping argument.} We consider the operator
		\[
		T:K_{\lambda}^\perp\to K_{\lambda}^\perp,\qquad T(v):=\left(\Pi_{\lambda}^\perp\LL\right)^{-1}\left(\Pi_{\lambda}^\perp\EE+\Pi_{\lambda}^\perp\NN(v)\right)
		\]
		which is well--defined by Lemma \ref{lem:invL}, and the set
		\[
		B_{\lambda}:=\left\{v\in K_{\lambda}^\perp\,:\,\|v\|_{\lambda}\leq c\lambda^{\frac14+\frac1{2\mu}-2\alpha}\right\}
		\]
		for a suitable constant $c>0$ to be chosen later. Note that, if we prove that $T$ has a unique fixed point $\Phi$ in $B_{\lambda}$, then $\Phi$ satisfies \eqref{rate} and
		\[
		\begin{split}
		\LL\Phi=\Pi_{\lambda}\LL(\Phi)+\Pi_{\lambda}^\perp\LL(\Phi)=&\,\Pi_{\lambda}\LL(\Phi)+\Pi_{\lambda}^\perp\EE+\Pi_{\lambda}^\perp\NN(\Phi)\\
		=&\,\EE+\NN(\Phi)+\Pi_{\lambda}\left(\LL(\Phi)-\NN(\Phi)-\EE\right)\,,
		\end{split}
		\]
		namely \eqref{lhe} with $c_i:=\langle\Pi_{\lambda}\left(\LL(\Phi)-\NN(\Phi)-\EE\right),\chi Z_\lambda^{i}\rangle_{\lambda}$, for every $i=1,\dots,N-1$, and the proof is completed.
		
		To this aim, we chose $c$ so that, for $\lambda$ large enough, $T$ is a contraction on $B_\lambda$. Observe first that, by Lemma \ref{lem:invL} there exists $c_1>0$ such that
		\begin{equation}
		\label{T}
		\begin{split}
		&\|T(v)\|_\lambda\leq c_1\|\Pi_{\lambda}^\perp\EE+\Pi_{\lambda}^\perp\NN(v)\|_\lambda\\
		&\|T(v_1)-T(v_2)\|_\lambda\leq c_1\|\Pi_{\lambda}^\perp\NN(v_1)-\Pi_\lambda^\perp\NN(v_2)\|_\lambda
		\end{split}
		\end{equation}
		for every $v,v_1,v_2\in B_\lambda$. Recalling \eqref{np}, we have
		\[
		\|\NN(v_1)-\NN(v_2)\|_\lambda^2=\int_\G\left(f(W_\lambda+v_1)-f(W_\lambda+v_2)-f'(W_\lambda)(v_1-v_2)\right)\left(\NN(v_1)-\NN(v_2)\right)\,dx
		\]
		so that by H\"older inequality
		\begin{equation}
		\label{N1}
		\|\NN(v_1)-\NN(v_2)\|_\lambda\leq\lambda^{-1/2}\left\|f(W_\lambda+v_1)-f(W_\lambda+v_2)-f'(W_\lambda)(v_1-v_2)\right\|_{L^2(\G)}\,.
		\end{equation}
		Now, recalling that $f(s)=(s^+)^{2\mu+1}$ and $\mu\geq\frac12$, the mean value theorem guarantees the existence of functions $\theta,\tilde\theta:\G\to[0,1]$ such that
		\[
		\begin{split}
		\int_\G\big|f(W_\lambda+v_1)&\,-f(W_\lambda+v_2)-f'(W_\lambda)(v_1-v_2)\big|^2\,dx\\
		=&\,(2\mu+1)^2(2\mu)^2\int_\G\big|W_\lambda+\tilde\theta(\theta v_1+(1-\theta v_2))\big|^{2(2\mu-1)}\big|\theta v_1+(1-\theta)v_2\big|^2\big|v_1-v_2\big|^2\,dx\,.
		\end{split}
		\]
		Since by definition $\|W_\lambda\|_\infty\lesssim \lambda^{\frac1{2\mu}}$, whereas by the $L^\infty$--Gagliardo--Nirenberg inequality of $\G$ (see e.g. \cite[Section 2]{AST}) and $v_i\in B_\lambda$, $i=1,2$,
		\[
		\|v_i\|_{L^\infty(\G)}^2\lesssim\|v_i\|_{L^2(\G)}\|v_i'\|_{L^2(\G)}\leq \lambda^{-1/2}\|v_i\|_{\lambda}^2\leq c^2\lambda^{\frac1\mu-4\alpha}\quad i=1,2\,,
		\]
		plugging into the previous formula yields
		\[
		\int_\G\big|f(W_\lambda+v_1)\,-f(W_\lambda+v_2)-f'(W_\lambda)(v_1-v_2)\big|^2\,dx\lesssim c^2\lambda^{2-4\alpha}\|v_1-v_2\|_{L^2(\G)}^2\leq c^2\lambda^{1-4\alpha}\|v_1-v_2\|_\lambda^2\,,
		\]
		which coupled with \eqref{N1} gives
		\[
		\|\NN(v_1)-\NN(v_2)\|_\lambda\lesssim c\lambda^{-2\alpha}\|v_1-v_2\|_\lambda\,.
		\]
		Combining with the second line of \eqref{T} shows that there exists a constant $c_2>0$ such that for every $\lambda$ large enough it holds
		\[
		\|T(v_1)-T(v_2)\|_\lambda\leq c_2 c \lambda^{-2\alpha}\|v_1-v_2\|_\lambda\qquad\forall v_1,v_2\in B_\lambda,
		\]
		i.e. $T$ is a contraction on $B_\lambda$ since $\alpha>0$ by assumption. Furthermore, by \eqref{T}, \eqref{estE} and the previous estimate with $v_1=v$, $v_2=0$ we obtain
		\[
		\|T(v)\|_\lambda\leq c_1\left(c_3\lambda^{\frac14+\frac1{2\mu}-2\alpha}+c_2c\lambda^{-2\alpha}\|v\|_\lambda\right)\leq c_1(c_3+c_2c^2\lambda^{-2\alpha})\lambda^{\frac14+\frac1{2\mu}-2\alpha}\,,
		\]
		for a suitable constant $c_3$. Hence, for sufficiently large $\lambda$ it is enough to choose e.g. $c=2c_1c_3$ to obtain that $T$ maps $B_\lambda$ into itself, thus concluding the proof. 
	\end{proof}

	\section{The finite-dimensional problem}
	\label{sec:bi}
	In view of Proposition \ref{resto}, to complete the proof of Theorem \ref{main} it is enough to solve the finite-dimensional problem in the coefficients $\mathfrak{b}_j$, $j=1,\dots, N-1$. This is done by  
	finding $\mathfrak b_1,...,\mathfrak b_{N-1}$ such that the numbers $c_i$'s in \eqref{lhe} are zero
	and so the function $W_\lambda+\Phi$ is a genuine solution of problem \eqref{pro3}.  In this section we start doing this by proving a result that relates the $\mathfrak{b}_j$'s we are looking for with the critical points of a suitable function on $\R^{N-1}$.
	To this end, we introduce $G:\R^{N-1}\to\R$ defined by
	\begin{equation}
	\label{gi}
	G(\mathfrak b_1,\dots,\mathfrak b_{N-1}):=\sum\limits_{j=1}^{N}\left(\sum_{k=1}^{N-1} \mathfrak b_k e^k_j\right)^3, 
	\end{equation}
	where the vectors $e^{\ell}$ are defined in \eqref{uj}. This function, which one usually refers to as the reduced energy, plays a crucial role in our discussion, as highlighted by the next result.

	\begin{proposition}
		\label{ridotto}
		Let $(\overline{\mathfrak b}_1,\dots,\overline{\mathfrak b}_{N-1})\in\R^{N-1}$ be an isolated critical point of $G$ with non-zero local degree. Then there exists $\lambda_0>0$ (depending on $(\overline{\mathfrak b}_1,\dots,\overline{\mathfrak b}_{N-1}))$ such that, for every $\lambda\ge\lambda_0$, there exists $(\mathfrak b_1^\lambda,\dots,\mathfrak b_{N-1}^\lambda)\in\R^{N-1},$ approaching $(\overline{\mathfrak b}_1,\dots,\overline{\mathfrak b}_{N-1})$ as $\lambda\to\infty$, such that
		the corresponding real numbers $c_1,\dots,c_{N-1}$ in \eqref{lhe} vanish. 
	\end{proposition}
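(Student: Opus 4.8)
The plan is to pin down the coefficients $c_1,\dots,c_{N-1}$ of \eqref{lhe} as explicit functions of $\mathfrak b=(\mathfrak b_1,\dots,\mathfrak b_{N-1})$ and $\lambda$, to show that their leading order is a nonsingular diagonal rescaling of $\nabla G$, and then to produce a zero of the vector field $\mathfrak b\mapsto(c_1,\dots,c_{N-1})$ near $\overline{\mathfrak b}=(\overline{\mathfrak b}_1,\dots,\overline{\mathfrak b}_{N-1})$ by a topological degree argument. First I would extract the $c_j$'s by testing \eqref{lhe} against $\chi Z^{(j)}_\lambda$. Since $\Phi\in K^\perp_\lambda$ and, directly from \eqref{lp}, \eqref{np}, \eqref{e}, one has $\LL(\Phi)-\EE-\NN(\Phi)=(W_\lambda+\Phi)-i^*_\lambda\big(f(W_\lambda+\Phi)\big)$, this pairing turns \eqref{lhe} into the linear system $\sum_{i=1}^{N-1}c_i M_{ij}=\big\langle (W_\lambda+\Phi)-i^*_\lambda(f(W_\lambda+\Phi)),\,\chi Z^{(j)}_\lambda\big\rangle_\lambda$, with Gram matrix $M_{ij}:=\langle \chi Z^{(i)}_\lambda,\chi Z^{(j)}_\lambda\rangle_\lambda$. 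By the pointwise orthogonality of the $e^i$ in \eqref{ej}--\eqref{uj} (together with the fact that $\chi',\chi''$ are supported where $Z^{(j)}_\lambda$ is exponentially small), $M$ is diagonal up to exponentially small off-diagonal entries, with $M_{jj}\sim d_j\lambda^{\frac1\mu+\frac12}$ for explicit constants $d_j>0$, exactly as in the computation \eqref{knto0}. In particular $M$ is invertible for $\lambda$ large, so each $c_j$ is uniquely determined and it suffices to expand the right-hand side.

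The dominant contribution comes from $-\langle\EE,\chi Z^{(j)}_\lambda\rangle_\lambda$. Writing $v_\lambda:=\sum_i b_{i,\lambda}Z^{(i)}_\lambda$ and recalling $\EE=i^*_\lambda(E_1+E_2)$ with $E_1,E_2$ as in \eqref{e1}--\eqref{e2}, the $E_1$ part pairs exponentially small against $Z^{(j)}_\lambda$, while the leading (quadratic in $v_\lambda$) part of $E_2$ is $\tfrac12 f''(\Psi_\lambda)v_\lambda^2$. Setting $s_p:=\sum_i\mathfrak b_i e^i_p$ and using $\Psi=(\phi,\dots,\phi)$ together with \eqref{zj}, a change of variables $x\mapsto x/\sqrt\lambda$ gives
\[
\langle\EE,\chi Z^{(j)}_\lambda\rangle_\lambda = c_\mu\,\lambda^{\frac12+\frac1\mu-2\alpha}\,I\sum_{p=1}^N s_p^2\,e^j_p + o\big(\lambda^{\frac12+\frac1\mu-2\alpha}\big),
\]
with $I=\int_0^{+\infty}\phi^{2\mu-1}(\phi')^3\,dx$ and $c_\mu\neq0$. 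The algebraic identity that makes $G$ from \eqref{gi} appear is $\sum_{p=1}^N s_p^2 e^j_p=\tfrac13\,\partial_{\mathfrak b_j}G(\mathfrak b)$, obtained by differentiating $G=\sum_p s_p^3$ in $\mathfrak b_j$.

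The two remaining pairings must be shown to be of strictly lower order. For $\NN(\Phi)$, inequality \eqref{in2}, the bound $\|\Phi\|_\lambda\lesssim\lambda^{\frac14+\frac1{2\mu}-2\alpha}$ from \eqref{rate} and the $L^\infty$--Gagliardo--Nirenberg inequality give $\langle\NN(\Phi),\chi Z^{(j)}_\lambda\rangle_\lambda=o\big(\lambda^{\frac12+\frac1\mu-2\alpha}\big)$. The delicate term, and the main obstacle of the whole argument, is $\langle\LL(\Phi),\chi Z^{(j)}_\lambda\rangle_\lambda=-\int_\G f'(W_\lambda)\Phi\,\chi Z^{(j)}_\lambda\,dx$ (using $\Phi\in K^\perp_\lambda$): a naive H\"older estimate bounds it only by $\lambda^{\frac12+\frac1\mu-2\alpha}$, i.e. the \emph{same} order as the main term. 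Here I would use that each $Z^{(j)}$ solves the linearized equation \eqref{linear}, so that $f'(\Psi_\lambda)\chi Z^{(j)}_\lambda=-(\chi Z^{(j)}_\lambda)''+\lambda\chi Z^{(j)}_\lambda$ up to terms supported on $\{\chi'\neq0\}$; integrating by parts and invoking $\langle\Phi,\chi Z^{(j)}_\lambda\rangle_\lambda=0$ collapses the $f'(\Psi_\lambda)$--contribution to exponentially small cut-off terms, leaving only $\int_\G(f'(W_\lambda)-f'(\Psi_\lambda))\Phi\,\chi Z^{(j)}_\lambda\,dx$, which by the estimate underlying \eqref{L2} is $\mathcal O\big(\lambda^{\frac12+\frac1\mu-3\alpha}\big)=o\big(\lambda^{\frac12+\frac1\mu-2\alpha}\big)$. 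It is this cancellation, rather than a mere size estimate, that prevents $\LL(\Phi)$ from polluting the leading order.

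Collecting the above and dividing by $M_{jj}$, I obtain, uniformly for $\mathfrak b$ in compact sets,
\[
c_j(\mathfrak b,\lambda)=-\kappa_j\,\lambda^{-2\alpha}\big(\partial_{\mathfrak b_j}G(\mathfrak b)+o(1)\big)\qquad\text{as }\lambda\to+\infty,
\]
with $\kappa_j=c_\mu/d_j$ nonzero of a common sign. Moreover $\mathfrak b\mapsto(c_1,\dots,c_{N-1})$ is continuous, since the fixed point $\Phi$ of Proposition \ref{resto} depends continuously on $\mathfrak b$ and the $c_j$'s are built continuously from it. Hence the rescaled field $\lambda^{2\alpha}(c_1,\dots,c_{N-1})$ converges uniformly on a small closed ball $\overline{B_\rho(\overline{\mathfrak b})}$ to $-\,\mathrm{diag}(\kappa_1,\dots,\kappa_{N-1})\nabla G$, whose local degree at $\overline{\mathfrak b}$ equals $\pm$ the nonzero local degree of $\nabla G$, because the diagonal factor is nonsingular. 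Choosing $\rho$ so small that $\overline{\mathfrak b}$ is the only zero of $\nabla G$ in $\overline{B_\rho(\overline{\mathfrak b})}$ and $\lambda$ large, the homotopy invariance of the Brouwer degree yields a zero $(\mathfrak b^\lambda_1,\dots,\mathfrak b^\lambda_{N-1})\in B_\rho(\overline{\mathfrak b})$ of $(c_1,\dots,c_{N-1})$; letting $\rho\to0$ forces $\mathfrak b^\lambda\to\overline{\mathfrak b}$, which is exactly the assertion.
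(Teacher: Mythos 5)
Your proposal is correct and follows essentially the same route as the paper: testing \eqref{lhe} against $\chi Z^{(j)}_\lambda$ to diagonalize in the $c_j$'s, extracting the leading term $\tfrac13\partial_{\mathfrak b_j}G$ from the quadratic part of $E_2$, killing the dangerous $\langle\LL(\Phi),\chi Z^{(j)}_\lambda\rangle_\lambda$ contribution via the linearized equation for $Z^{(j)}$, integration by parts and $\Phi\in K^\perp_\lambda$, and concluding by degree/homotopy invariance. The only (harmless) discrepancy is that the Gram matrix is in fact exactly diagonal by the pointwise orthogonality of the $e^i$'s, not merely diagonal up to exponentially small entries; your explicit remark on the continuity of $\mathfrak b\mapsto(c_1,\dots,c_{N-1})$ is a point the paper leaves implicit.
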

	\begin{proof}
		For every $j=1,\dots, N-1$, multiplying \eqref{lhe} by $\chi Z_\lambda^{(j)}$ we have
		\begin{equation}\label{rido1}
		\langle\LL(\Phi)-\EE-\NN(\Phi), \chi Z_\lambda^{(j)} \rangle_\lambda=\sum\limits_{i=1}^{N-1}c_i\langle \chi Z^{(i)}_\lambda, \chi Z^{(j)}_\lambda \rangle_\lambda=c_j\left(a\lambda^{\frac1\mu+\frac12}+o\left(\lambda^{\frac1\mu+\frac12}\right)\right)
		\end{equation}
		for every $\lambda$ large enough and for a suitable constant $a>0$, since by the same computations in \eqref{Bn}, \eqref{est1}
		\[
		\langle \chi Z^{(i)}_\lambda,\chi Z^{(j)}_\lambda\rangle_\lambda=0\quad \hbox{if}\ i\not=j\qquad \hbox{and}\qquad  \left \| \chi Z^{(j)}_\lambda\right\|_\lambda^2=a \lambda^{\frac12+\frac{1}{\mu}}+o\left( \lambda^{\frac12+\frac{1}{\mu}}\right)
		\]
		as $\lambda\to+\infty$. Hence, system \eqref{rido1} is diagonal in the $c_j$'s and, to prove that it admits only the trivial solution $c_j=0$ for every $j$, it is enough to find suitable values of $\lambda$ and of the $\mathfrak{b}_j$'s for which the left hand side of \eqref{rido1} is equal to zero. 
		
		To this end, we show that, for sufficiently large $\lambda$, there exist $\mathfrak{b}_1^\lambda,\dots,\mathfrak{b}_{N-1}^\lambda$ as in the statement of the proposition and making the left hand side of \eqref{rido1} vanish by proving that
		\begin{align}\label{dome}
		\langle\mathscr L(\Phi)-\mathscr E-\mathscr N(\Phi)),\chi Z^{(j)}_\lambda \rangle_\lambda=-\lambda^{\frac12+\frac{1}{\mu}-2\alpha}A\frac{\partial G(\mathfrak b_1, \mathfrak b_2,...,\mathfrak b_{N-1}) }{\partial \mathfrak b_j}(1+o(1))\,,
		\end{align}
		where
		\[
		A:=\frac{\mu(2\mu+1)}3\int_{\R^+}\phi^{2\mu-1}\(\phi'\)^3dx\,.
		\]
		Observe that this is enough to conclude, since for large $\lambda$ we can interpret the right hand side of \eqref{dome} as $-A\lambda^{\frac12+\frac1\mu-2\alpha}$ times the $j$-th derivative of a small perturbation of $G$. Since by assumption $G$ admits a critical point $(\overline{\mathfrak b}_1,\dots,\overline{\mathfrak b}_{N-1})$ with non-zero local degree, such a perturbation admits a critical point too, say $(\mathfrak{b}_1^\lambda,\dots,\mathfrak{b}_{N-1}^\lambda)$, converging to $(\overline{\mathfrak b}_1,\dots,\overline{\mathfrak b}_{N-1})$ as $\lambda\to+\infty$.
		
		We are thus left to prove \eqref{dome}. First, we estimate the term 
		\begin{equation}
		\label{EZ}
		\langle \EE ,\chi Z^{(j)}_\lambda \rangle_\lambda=\int_{\G}  E_1 \chi Z^{(j)}_\lambda dx+\int_{\G}  E_2 \chi Z^{(j)}_\lambda dx\,,
		\end{equation}
		where $E_1,E_2$ as in \eqref{e1},\eqref{e2} (the previous identity follows by \eqref{eqE}). We have
		\[
		\begin{split}
		\left|\int_{\G} E_1 \chi Z^{(j)}_\lambda dx\right|&=\left|\int_{\G}\bigg[\chi''\bigg(\Psi_\lambda+\sum_{i=1}^{N-1}b_{i,\lambda}Z_\lambda^{(i)}\bigg)+2\chi'\bigg(\Psi_\lambda+\sum_{i=1}^{N-1}b_{i,\lambda}Z_\lambda^{(i)}\bigg)'\right.\\
		&\qquad\qquad\qquad\qquad\qquad\qquad\left.+(\chi^{2\mu+1}-\chi)\bigg(\Psi_\lambda+\sum_{i=1}^{N-1}b_{i,\lambda}Z_\lambda^{(i)}\bigg)^{2\mu+1}\bigg]\chi Z^{(j)}_\lambda dx\right|\\
		&\lesssim \lambda^{\frac1{\mu}}\int_{B(\overline{\vv},2\ell)\setminus B(\overline{\vv},\ell)}\left|\Psi(\sqrt{\lambda}\,x)+\sum_{i=1}^{N-1}b_{i,\lambda}Z^{(i)}(\sqrt{\lambda}\,x)\right|\left|Z^{(j)}(\sqrt{\lambda}\,x)\right|\,dx\\
		&\quad +\lambda^{\frac1\mu+\frac12}\int_{B(\overline{\vv},2\ell)\setminus B(\overline{\vv},\ell)}\left|\Psi'(\sqrt{\lambda}\,x)+\sum_{i=1}^{N-1}b_{i,\lambda}\big(Z^{(i)}\big)'(\sqrt{\lambda}\,x)\right|\left|Z^{(j)}(\sqrt{\lambda}\,x)\right|\,dx\\
		&\quad +\lambda^{\frac1\mu+1}\int_{B(\overline{\vv},2\ell)\setminus B(\overline{\vv},\ell)}\left|\Psi(\sqrt{\lambda}\,x)+\sum_{i=1}^{N-1}b_{i,\lambda}Z^{(i)}(\sqrt{\lambda}\,x)\right|^{2\mu+1}\left|Z^{(j)}(\sqrt{\lambda}\,x)\right|\,dx\,.
		\end{split}
		\]
		Since $b_{i,\lambda}=\lambda^{-\alpha}\mathfrak{b}_i$ with $\mathfrak{b}_i$'s in a bounded neighbourhood of $\left(\overline{\mathfrak b}_1,\dots,\overline{\mathfrak{b}}_{N-1}\right)$ and $Z^{(i)},\big(Z^{(i)}\big)'\in L^\infty(\starN)$, if $\lambda$ is large enough the previous estimate becomes
		\[
		\begin{split}
		\left|\int_{\G} E_1 Z^{(j)}_\lambda dx\right|&\,\lesssim\lambda^{\frac1\mu-\frac12}\int_{\ell\sqrt{\lambda}}^{2\ell\sqrt{\lambda}}|\phi||\phi|'\,dx+\lambda^{\frac1\mu}\int_{\ell\sqrt{\lambda}}^{2\ell\sqrt{\lambda}}|\phi'|^2\,dx+\lambda^{\frac1\mu+\frac12}\int_{\ell\sqrt{\lambda}}^{2\ell\sqrt{\lambda}}|\phi|^{2\mu+1}|\phi'|\,dx\,,
		\end{split}
		\]
		where $\phi$ is the soliton in \eqref{phi}, and since both $\phi,\phi'$ decay exponentially as $x\to+\infty$ this implies that 
		\begin{equation}
		\label{e1Z}
		\left|\int_\G E_1 Z_\lambda^{(j)}\,dx\right|\lesssim e^{-\sigma\sqrt{\lambda}} \qquad\text{as }\lambda\to+\infty
		\end{equation}
		for some $\sigma>0$. As for the second integral on the right hand side of \eqref{EZ}, by \eqref{e2} we write
		\begin{equation}
		\label{E2Z}
		\int_{\G}  E_2 \chi Z^{(j)}_\lambda \,dx=\frac12\int_{\G}\chi^2 f''(\Psi_\lambda) \(\sum_{i=1}^{N-1}b_{i,\lambda}Z^{(i)}_\lambda\)^2Z^{(j)}_\lambda dx+ \Theta(\lambda)
		\end{equation}
		with
		\[
		\begin{split}
		\Theta(\lambda):=&\int_{\G} \chi^2\left[f\(\Psi_\lambda+\sum_{i=1}^{N-1}b_{i,\lambda}Z_\lambda^{(i)}\)-f(\Psi_\lambda)-f'\(\Psi_\lambda\) \sum_{i=1}^{N-1}b_{i,\lambda} Z_\lambda^{(i)}\right.\\
		&\qquad\qquad\qquad\qquad\qquad\qquad\qquad\qquad\qquad\left.-\frac12 f''(\Psi_\lambda) \(\sum_{i=1}^{N-1}b_{i,\lambda}Z^{(i)}_\lambda\)^2\right]Z^{(j)}_\lambda dx\,.
		\end{split}
		\]
		Since $f(s)=(s^+)^{2\mu+1}$, by \eqref{in3} and the definition of $b_i$  we obtain
		\[
		\begin{split}
		\left|\Theta(\lambda)\right|&\,\lesssim\int_{\starN}\Psi_\lambda^{2\mu-2}\left|\sum_{i=1}^{N-1}b_{i,\lambda}Z_\lambda^{(i)}\right|^3\left|Z_\lambda^{(j)}\right|\,dx+\int_{\starN}\left|\sum_{i=1}^{N-1}b_{i,\lambda}Z_\lambda^{(i)}\right|^{2\mu+1}\left|Z_\lambda^{(j)}\right|\,dx\\
		&\,\lesssim \lambda^{1+\frac1\mu-3\alpha}\sum_{i=1}^{N-1}\int_{\starN}\Psi^{2\mu-2}(\sqrt{\lambda}\,x)\left|Z^{(i)}(\sqrt{\lambda}\,x)\right|^3\left|Z^{(j)(\sqrt{\lambda}\,x)}\right|\,dx\\
		&\quad+\lambda^{1+\frac1\mu-(2\mu+1)\alpha}\sum_{i=1}^{N-1}\int_{\starN}\left|Z^{(i)}(\sqrt{\lambda}\,x)\right|^{2\mu+1}\left|Z^{(j)}(\sqrt{\lambda}\,x)\right|\,dx\\
		&\,\lesssim\lambda^{\frac12+\frac1\mu-3\alpha}\int_{\R^+}\phi^{2\mu-2}\left|\phi'\right|^4\,dx+\lambda^{\frac12+\frac1\mu-(2\mu+1)\alpha}\int_{\R^+}|\phi'|^{2\mu+1}\,dx=o\left(\lambda^{\frac12+\frac1\mu-2\alpha}\right)\quad\text{as }\lambda\to+\infty\,,
		\end{split}
		\]
		where the finiteness of the integrals appearing in the last line is guaranteed by the properties of $\phi$ and the last equality makes use of $2\mu+1\geq2$. Combining with \eqref{E2Z} then yields
		\[
		\begin{split} 
		\int_{\G}  E_2 \chi Z^{(j)}_\lambda dx &=\frac12\int_{\G} \chi^2 f''(\Psi_\lambda) \(\sum_{i=1}^{N-1}b_{i,\lambda}Z^{(i)}_\lambda\)^2Z^{(j)}_\lambda dx+ o\left(\lambda^{\frac12+\frac1\mu-2\alpha}\right) \\ &=\mu(2\mu+1)\lambda^{\frac12+\frac{1}{\mu}} \int_{\starN}\Psi^{2\mu-1} \left(\sum_{i=1}^{N-1}b_{i,\lambda}Z^{(i)}\right)^2Z^{(j)} dx +  o\(\lambda^{\frac12+\frac{1}{\mu}-2\alpha}\)\\
		&=\mu(2\mu+1)\lambda^{\frac12+\frac{1}{\mu}-2\alpha} \int_{\R^+}\phi^{2\mu-1} (\phi')^3dx \underbrace{\sum_{k=1}^{N-1}\(\sum_{i=1}^{N-1}\mathfrak b_i e_k^{(i)}\)^2e_k^{(j)}}_{= \frac 13\frac{\partial G(\mathfrak b_1,\dots,\mathfrak b_{N-1}) }{\partial \mathfrak b_j}}+ o\(\lambda^{\frac12+\frac{1}{\mu}-2\alpha}\),
		\end{split}
		\]
		which coupled with \eqref{EZ}, \eqref{e1Z} entails
		\begin{equation}
		\label{eG}
		\langle \EE ,\chi Z^{(j)}_\lambda \rangle_\lambda=-\lambda^{\frac12+\frac1\mu-2\alpha}A\frac{\partial G(\mathfrak b_1, \dots,\mathfrak b_{N-1})}{\partial\mathfrak b_j}(1+o(1))
		\end{equation}
		for every $\lambda$ large enough.
		
		As for the term $\langle\NN(\Phi),\chi Z_\lambda^{(j)}\rangle_\lambda$, by \eqref{np}, \eqref{in2}, the fact that $b_{i,\lambda}=\lambda^{-\alpha}\mathfrak{b}_i\to0$ as $\lambda\to+\infty$, and the standard $L^\infty$ Gagliardo--Nirenberg inequality on $\G$ \cite[Section 2]{AST} we have
		\begin{equation}
		\label{NZ}
		\begin{split}
		\left|\langle \NN(\Phi) ,\chi Z^{(j)}_\lambda\rangle_\lambda\right| & =\left|\int_{\G}\left(f(W_\lambda+\Phi)-f(W_\lambda)-f'(W_\lambda)\Phi\right)\chi Z^{(j)}_\lambda dx\right| \\
		&\lesssim \int_{\G} \chi\left(|W_\lambda|^{2\mu-1}|\Phi|^2+|\Phi|^{2\mu+1}\right) \left|Z^{(j)}_\lambda\right| dx \lesssim \left(\lambda+\lambda^{\frac1{2\mu}}\|\Phi\|_{L^\infty(\G)}^{2\mu-1}\right)\|\Phi\|_{L^2(\G)}^2\\
		&\lesssim \left(1+\lambda^{\frac1{2\mu}-1}(\lambda^{-\frac14}\|\Phi\|_\lambda)^{2\mu-1}\right)\|\Phi\|_\lambda^2\lesssim \lambda^{\frac12+\frac1\mu-4\alpha}=o\left(\frac12+\frac1\mu-2\alpha\right)
		\end{split}
		\end{equation}
		since $\alpha>0$, the last inequality making use of \eqref{rate}.
		
		Finally, since $\Phi\in K^\perp_\lambda$, by \eqref{lp} we have
		\begin{equation}
		\label{LZ}
		\begin{split}
		\langle \LL(\Phi) , \chi Z^{(j)}_\lambda \rangle_\lambda &=-\int_{\G}f'(W_\lambda)\Phi\chi Z^{(j)}_\lambda\,dx\\
		&=-\int_{\G}\(f'(W_\lambda)-f'(\chi \Psi_\lambda ) \)\Phi \chi Z^{(j)}_\lambda\,dx -\int_{\G}f'(\chi \Psi_\lambda)\Phi \chi Z^{(j)}_\lambda\,dx=o\(\lambda^{\frac12+\frac{1}{\mu}-2\alpha}\)\,.
		\end{split}
		\end{equation}
		Indeed, by \eqref{in1}, the definition of $b_{i,\lambda}$, \eqref{rate} and $\alpha>0$
		\begin{align*}
		&\int_{\G}\left|f'(W_\lambda)-f'(\chi \Psi_\lambda ) \right|\left|\Phi \chi Z^{(j)}_\lambda\right|\,dx\\
		&\qquad \lesssim
		\int_{\G}\chi\Psi_\lambda ^{2\mu-1}\left|\sum b_{i,\lambda}  Z^{(i)}_\lambda\right|\left|\Phi Z^{(j)}_\lambda\right|\,dx+\int_\G\chi\left|\sum b_{i,\lambda}  Z^{(i)}_\lambda\right|^{2\mu}\left|\Phi Z^{(j)}_\lambda\right|\,dx\\
		&\qquad\lesssim\left( \left\|\Psi_\lambda^{2\mu-1}\left(\sum_{i=1}^{N-1}b_{i,\lambda}Z_\lambda^{(i)}\right)Z_\lambda^{(j)}\right\|_{L^2(\starN)}+\left\|\left(\sum_{i=1}^{N-1}b_{i,\lambda}Z_\lambda^{(i)}\right)^{2\mu}Z_\lambda^{(j)}\right\|_{L^2(\starN)}\right)\|\Phi\|_{L^2(\G)}\\
		&\qquad \lesssim\left(\lambda^{\frac34+\frac1{2\mu}-\alpha}+\lambda^{\frac34+\frac1{2\mu}-2\mu\alpha}\right)\|\Phi\|_{L^2(\G)}\lesssim\left(\lambda^{\frac14+\frac1{2\mu}-\alpha}+\lambda^{\frac14+\frac1{2\mu}-2\mu\alpha}\right)\|\Phi\|_\lambda=o\(\lambda^{\frac12+\frac{1}{\mu}-2\alpha}\)\,.
		\end{align*}
		Moreover, since $\Phi\in K_\lambda^\perp$,
		\begin{align*}-\int_{\G}f'(\chi \Psi_\lambda)\Phi\chi Z^{(j)}_\lambda\,dx&=\underbrace{\int_\G \Phi' \(\chi Z^{(j)}_\lambda\)'+\lambda \Phi\chi Z^{(j)}\,dx}_{=0}-\int_{\G}f'(\chi \Psi_\lambda)\Phi\chi Z^{(j)}_\lambda\,dx\\
		&=\int_{\starN}   \(-\(Z^{(j)}_\lambda\)''+\lambda  Z_\lambda^{(j)} - f'( \Psi_\lambda)Z^{(j)}_\lambda\)\chi\Phi\,dx+\sum_{e\succ \overline{\vv}}\Phi(\overline{\vv})\left(\chi Z^{(j)}_{\lambda}\right)_e'(\overline{\vv})\\
		&\quad+\int_{\G}\left(f'(  \Psi_\lambda)-f'(\chi \Psi_\lambda)\right)\Phi \chi Z^{(j)}_\lambda\,dx-\int_\G\left(\chi'' Z_\lambda^{(j)}+2\chi' \left(Z_\lambda^{(j)}\right)'\right)\Phi\,dx\,.
		\end{align*}
		By Lemma \ref{non}, the definition of $Z_\lambda^{(j)}$ and the properties of $\chi$, one has
		\[
		\int_{\starN}   \(-\(Z^{(j)}_\lambda\)''+\lambda  Z_\lambda^{(j)} - f'( \Psi_\lambda)Z^{(j)}_\lambda\)\chi\Phi\,dx=0
		\]
		and
		\[
		\sum_{e\succ \overline{\vv}}\Phi(\overline{\vv})\left(\chi Z^{(j)}_{\lambda}\right)_e'(\overline{\vv})=\sum_{e\succ \overline{\vv}}\Phi(\overline{\vv})\left(Z^{(j)}_{\lambda}\right)_e'(\overline{\vv})=0\,,
		\]
		whereas arguing as above one easily see that as $\lambda\to+\infty$
		\[
		\left|\int_{\G}\left(f'(  \Psi_\lambda)-f'(\chi \Psi_\lambda)\right)\Phi \chi Z^{(j)}_\lambda\,dx-\int_\G\left(\chi'' Z_\lambda^{(j)}+2\chi' \left(Z_\lambda^{(j)}\right)'\right)\Phi\,dx\right|\lesssim e^{-\gamma\sqrt{\lambda}}
		\]
		for some $\gamma>0$ (note that both integrals on the left hand side are nonzero only on $B(\overline{\vv},2\ell)\setminus B(\overline{\vv},\ell)$). Coupling all together proves \eqref{LZ}, which combined with \eqref{eG} and \eqref{NZ} yields  \eqref{dome} and completes the proof. 
	\end{proof}

	\section{The reduced energy and the end of the proof of Theorem \ref{main1}}
	\label{sec:G}
	
	This section characterizes the critical points of the reduced energy $G$ introduced in \eqref{gi}. We observe that the whole analysis developed so far is insensitive of the degree $N$ of the vertex $\overline{\vv}$, which for the results of Sections \ref{sec:phi}--\ref{sec:bi} to hold needs to be just greater than or equal to 2. Conversely, the result of this section is the only point in our work where we need to impose $N$  odd, as the next lemma clearly shows. 
	
	\begin{lemma}
		\label{grado} 
		For every odd $N \ge 3$, the unique critical point of $G$ is $0$ and its local degree satisfies 
		\[
		\mathtt{deg} (\nabla G, 0) = (-1)^{\frac{N-1}2} \binom{\scriptstyle{N-1}}{\frac{N-1}2}.
		\]
	\end{lemma}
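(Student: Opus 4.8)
The plan is to strip away the auxiliary vectors $e^k$ by a linear change of variables, reduce $G$ to a symmetric cubic on $\R^{N-1}$, and then read off both the uniqueness of the critical point and the value of the degree from one well-chosen regular value. Concretely, I would set $y_j:=\sum_{k=1}^{N-1}\mathfrak b_k e^k_j$ for $j=1,\dots,N$, so that $G(\mathfrak b)=\sum_{j=1}^N y_j^3$. By \eqref{ej} the vectors $e^1,\dots,e^{N-1}$ are orthogonal and each has vanishing coordinate sum, hence they form a basis of the hyperplane $H:=\{y\in\R^N:\ \sum_j y_j=0\}$ and $\mathfrak b\mapsto y$ is a linear isomorphism of $\R^{N-1}$ onto $H$. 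Eliminating $y_N=-\sum_{i=1}^{N-1}y_i$ I obtain $G=\tilde G\circ A_0$ for some $A_0\in GL(N-1,\R)$, where
\[
\tilde G(z):=\sum_{i=1}^{N-1}z_i^3-\Big(\sum_{i=1}^{N-1}z_i\Big)^3,\qquad z\in\R^{N-1}.
\]
Since $\nabla G(\mathfrak b)=A_0^{\top}\,\nabla\tilde G(A_0\mathfrak b)$, the preimages of a regular value correspond under $\mathfrak b\mapsto A_0\mathfrak b$ and the two Jacobian determinants differ only by the positive factor $(\det A_0)^2$; hence the local degree is unaffected and it suffices to treat $\tilde G$.

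Next I would compute the critical points from $\partial_i\tilde G=3(z_i^2-s^2)$, with $s:=\sum_i z_i$. The equation $\nabla\tilde G(z)=0$ forces $z_i=\pm s$ for every $i$; if $s\neq0$ this reads $z_i=\epsilon_i s$ with $\epsilon_i\in\{\pm1\}$, and then $s=s\sum_i\epsilon_i$ gives $\sum_{i=1}^{N-1}\epsilon_i=1$, which is impossible because $N-1$ is even. Therefore $s=0$ and $z=0$, so the origin is the unique, isolated critical point. This is exactly the step where the oddness of $N$ enters.

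For the degree I would exploit that $\nabla\tilde G$ is homogeneous of degree $2$ with the origin as its only zero, so $\min_{|z|=1}|\nabla\tilde G|>0$ and, by homogeneity, $\deg(\nabla\tilde G,0)$ equals the signed count $\sum_{\nabla\tilde G(z)=q}\operatorname{sign}\det D(\nabla\tilde G)(z)$ for any sufficiently small regular value $q$. The crucial choice is $q=(c,\dots,c)$ with $c>0$: then $z_i^2-s^2=c/3$ for all $i$ forces all $z_i^2=s^2+c/3$ equal, so $z_i=\epsilon_i\rho$ with $\rho=\sqrt{s^2+c/3}$ and, writing $k$ for the number of positive signs, $\rho^2\big(1-(2k-(N-1))^2\big)=c/3$. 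For $c>0$ this is solvable only when $2k-(N-1)=0$, i.e.\ $k=\tfrac{N-1}2$, in which case $s=0$ and $\rho=\sqrt{c/3}$. Thus the preimages of $q$ are exactly the $\binom{N-1}{(N-1)/2}$ sign vectors with equally many $+$ and $-$ entries, and at each of them $D(\nabla\tilde G)=6\,\mathrm{diag}(z)-6s\,\mathbf 1\mathbf 1^{\top}=6\,\mathrm{diag}(z)$, whose determinant has sign $(-1)^{(N-1)/2}$ since exactly $(N-1)/2$ of the $z_i$ are negative. Summing over the preimages gives $\deg(\nabla\tilde G,0)=(-1)^{(N-1)/2}\binom{N-1}{(N-1)/2}$, which transports back to $\nabla G$ by the first paragraph.

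The main obstacle — really the one genuine idea — is the choice of the regular value along the diagonal: it is precisely this that forces all $|z_i|$ to coincide, isolates the balanced sign patterns, and thereby produces the central binomial coefficient from a bare sign count together with the index $(-1)^{(N-1)/2}$ coming from the diagonal Jacobian. Everything else is routine verification: that $q$ is a regular value (the diagonal Jacobian above is nonsingular), that its finitely many preimages (all with $|z|^2=(N-1)c/3$) sit inside a fixed ball, and that homogeneity lets one slide the target from $0$ to $q$ without meeting $\nabla\tilde G(\partial B_R)$ for $R$ large, so that the local degree indeed equals the signed preimage count. None of these presents a real difficulty once the symmetric reduction and the diagonal value are in place.
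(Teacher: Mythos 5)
Your argument is correct and is essentially the paper's own proof: the same change of variables reduces $G$ to $\sum_j z_j^3-(\sum_j z_j)^3$, the same sign analysis shows $0$ is the only critical point when $N$ is odd, and your choice of the regular value $q=(c,\dots,c)$ is exactly the paper's perturbation $\oG_\eps=\oG-\eps^2\sum_j x_j$ in disguise, since the zeros of $\nabla\oG_\eps$ are precisely the preimages of $(\eps^2,\dots,\eps^2)$ under $\nabla\oG$. The resulting count of balanced sign patterns and the diagonal Jacobian $6\,\mathrm{diag}(z)$ at $s=0$ coincide with the paper's computation, so no further comment is needed.
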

	
	\begin{proof} We first note, for future reference, that from the definition \eqref{uj} of the vectors $e^i$ we have
		\begin{equation}
		\label{prop1}
		\sum_{j=1}^{N-1} e_j^i = -e_N^i = \begin{cases} 0 & \text{ if } i \le N-2 \\ N-1 & \text{ if } i = N-1. \end{cases}
		\end{equation}
		In view of this, we can write $G$ as 
		\[
		G(\mathfrak b_1,\dots,\mathfrak b_{N-1}):=\sum\limits_{j=1}^{N-1}\left(\sum\limits_{i=1}^{N-1} e^i_j \mathfrak b_i\right)^3 + \left( \sum_{i=1}^{N-1} e_N^i \mathfrak b_i \right)^3 = \sum\limits_{j=1}^{N-1}\left(\sum\limits_{i=1}^{N-1} e^i_j \mathfrak b_i\right)^3 - \big((N-1)  \mathfrak b_{N-1}\big)^3.
		\]
		Now we take the matrix $A := (e^i_j)_{ji}$, with $j,i=1,\dots,N-1$  and we change variables by setting $x := A\mathfrak b$, namely
		\[
		x_j := \sum_{i = 1}^{N-1}  e^i_j \mathfrak b_i,\qquad j= 1,\dots, N-1.
		\]
		Notice (from the definition of the vectors $e^i$) that the matrix $A$ satisfies $\det A \ne 0$, and that, by \eqref{prop1},
		\[
		\sum_{j=1}^{N-1} x_j = \sum_{j=1}^{N-1} \sum_{i = 1}^{N-1}  e^i_j \mathfrak b_i = 
		\sum_{i = 1}^{N-1} \mathfrak b_i \left(\sum_{j=1}^{N-1}  e^i_j\right) = - \sum_{i = 1}^{N-1} e_N^i \mathfrak b_i = (N-1) \mathfrak b_{N-1}.
		\]
		Defining $\oG : \R^{N-1} \to \R$ by $\oG(x) = G(A^{-1}x)$, so that $\oG(x) = G(\mathfrak b)$ whenever $x = A\mathfrak b$, there results
		\[
		\oG(x_1,\dots,x_{N-1}) = \sum_{j=1}^{N-1} x_j^3 - \left( \sum_{j=1}^{N-1} x_j \right)^3,
		\]
		and since $A$ is invertile, the local degree of $0$ is the same for $\nabla G$ and $\nabla \oG$.
		
		To find the critical points of $\oG$ we observe that
		\begin{equation}
		\label{deriv}
		\frac{\partial \oG}{\partial x_k}(x_1,\dots,x_{N-1}) =0  \qquad\text{if and only if  } \quad x_k^2 =  \left( \sum_{j=1}^{N-1} x_j \right)^2 \qquad\forall k =1,\dots, N-1.
		\end{equation}
		Since the right hand sides do not depend on $k$, this entails that $x_i^2 = x_k^2$ for every $i, k$, so that all critical points have the form
		\[
		(x_1,\dots,x_{N-1}) = (\sigma_1 t, \dots, \sigma_{N-1}t), \qquad \text{with }\quad \sigma_j \in \{+1,-1\}\quad \text{for every}\quad j = 1,\dots, N-1,
		\]
		for some $t \in \R$.
		Now, if $n$ denotes the number of negative $\sigma_j$'s in $(\sigma_1 t, \dots, \sigma_{N-1}t)$, then by \eqref{deriv}
		\[
		t^2 =  \left( \sum_{j=1}^{N-1} x_j \right)^2 = \big( (N-1-n) t - nt)\big)^2 = (N-1-2n)^2 t^2,
		\]
		or
		\begin{equation}
		\label{punticritici}
		\big( (N-1-2n)^2 -1\big) t^2 =0.
		\end{equation}
		As $N$ is odd, the coefficient of $t^2$ never vanishes, so that $\nabla \oG(x) = 0$ if and only if $x=0$, and the same holds of course for $\nabla G$. However, $x=0$ is degenerate, since the Hessian matrix of $\oG$ at $0$ is the null matrix. To compute the local degree of $0$ we therefore perturb $\oG$ by defining,
		for fixed small $\eps >0$,
		\[
		\oG_\eps(x_1,\dots, x_{N-1}) = \oG(x_1,\dots, x_{N-1}) - \eps^2 \sum_{j=1}^{N-1} x_j.
		\]
		Now, as above, we see that if $x$ is a critical point for $\oG_\eps$, then
		\begin{equation}
		\label{grad}
		x_k^2 =  \left( \sum_{j=1}^{N-1} x_j \right)^2 + \eps^2 \qquad\forall k =1,\dots, N-1.
		\end{equation}
		Since, again, the right hand sides do not depend on $k$, we find once more that  if $x$ is critical for $\oG_\eps$, then
		\[
		(x_1,\dots,x_{N-1}) = (\sigma_1 t, \dots, \sigma_{N-1}t), \qquad \text{with }\quad \sigma_j \in \{+1,-1\}\quad \text{for every}\quad j = 1,\dots, N-1,
		\]
		for some $t \in \R$. Denoting again by $n$ the number of negative $\sigma_j$'s, we see from \eqref{grad} that 
		\[
		t^2 =  \left( \sum_{j=1}^{N-1} x_j \right)^2 +\eps^2 = \big( (N-1-n) t - nt)\big)^2 +\eps^2 = (N-1-2n)^2 t^2 +\eps^2,
		\]
		namely
		\[
		\big( (N-1-2n)^2 -1 \big)t^2 = -\eps^2.
		\]
		Now it is immediate to check that the coefficient of $t^2$ is negative if and only if $n = \frac{N-1}{2}$, in which case the preceding equation reduces to $t^2 = \eps^2$, yielding $t = \pm \eps$.
		
		Therefore the critical points of $\oG_\eps$ are of the form
		\[
		x =  (\sigma_1 \eps, \dots, \sigma_{N-1}\eps)
		\]
		where $ \frac{N-1}{2}$ of the $\sigma_j$'s are negative and $ \frac{N-1}{2}$ positive. For this reason,  there are exactly $\displaystyle\binom{N-1}{\frac{N-1}2}$ such critical points.
		
		Finally, to compute the degree, we notice that the entries $h_{ij}(x)$ of the Hessian matrix $H\oG_\eps(x)$ have the form
		\[
		h_{ii}(x) = 6x_i - 6\sum_{j=1}^{N-1} x_j, \qquad h_{ij}(x) = - 6\sum_{j=1}^{N-1} x_j \quad\text{ if } i \ne j.
		\]
		But at a critical point $x$ we have
		\[
		\sum_{j=1}^{N-1} x_j = \sum_{j=1}^{N-1} \sigma_j\eps = \frac{N-1}2 \eps -  \frac{N-1}2 \eps = 0,
		\]
		and therefore 
		\[
		H\oG_\eps(x) = {\rm diag} \{6\sigma_1 \eps,\dots, 6\sigma_{N-1}\eps\},
		\]
		from which we obtain
		\[
		\det H\oG_\eps(x) = (-1)^{\frac{N-1}2} (6\eps)^{N-1}
		\]
		for every critical point of $\oG_\eps$. Thus, if $\eps < 1$,
		\[
		\mathtt{deg} (\nabla G, 0) =  \mathtt{deg} (\nabla \oG, B_1, 0)  = \mathtt{deg} (\nabla \oG_\eps, B_1, 0) = \sum_{x \in \oG_\eps^{-1}(0)} {\rm sgn} \det H\oG_\eps(x) =  (-1)^{\frac{N-1}2} \binom{\scriptstyle N-1}{\frac{\scriptstyle{N-1}}2},
		\]
		as there are exactly $\displaystyle\binom{N-1}{\frac{N-1}2}$ critical points.
	\end{proof}
	
	\begin{remark}
		\label{rem:Npari}
		If $N$ is even, by \eqref{punticritici} one can easily characterize the critical points of $\G$. To this end, consider the sets
		\[
		\mathcal{I}:=\left\{I\subset\{1,\dots,N-1\}\,:\,\#I=\frac N2\quad\text{or}\quad\#I=\frac N2-1\right\}
		\]
		and, for every $I\in\mathcal I$,
		\[
		\Sigma_I:=\left\{\sigma=(\sigma_1,\dots,\sigma_{N-1})\in\left\{-1,1\right\}^{N-1}\,:\,\sigma_i=-1\quad\Longleftrightarrow\quad i\in I\right\}\,.
		\]
		Then the set of all critical points of $G$ is given by
		\[
		\bigcup_{I\in\mathcal I}\bigcup_{\sigma\in\Sigma_I}\left\{\sigma t\,:\,t\in\R\right\}\,,
		\]
		namely the union of $\displaystyle2\binom{N-1}{\frac N2}$ straight lines passing through the origin in $\R^{N-1}$. In particular,  $\G$ has no isolated critical point and Proposition \ref{ridotto} does not apply anymore.
	\end{remark}
	
	\begin{proof}[Proof of Theorem \ref{main1}]
		Its is a direct consequence of Propositions \ref{resto}--\ref{ridotto} and Lemma \ref{grado}.
	\end{proof}
	
	\section{Proof of Theorem \ref{main2}}
	\label{sec:multi}
	
	This section is devoted to our main result about multi peaked solutions of \eqref{nlse} concentrating at finitely many vertices with odd degree. Since the argument of the proof is analogous to that developed so far for Theorem \ref{main1}, here we limit to sketch it highlighting the main differences.
	
	Let $M\ge2$ be a fixed natural number and $\overline{\vv}_1,\dots,\overline{\vv}_M\in\V$ be distinct vertices of $\G$ such that $N_i:=\dg(\overline{\vv}_i)\ge3$ is odd for every $i$. As pointed out in the Introduction, with no loss of generality we identify each $\overline{\vv}_i$ with $0$ along every edge $e\succ\overline{\vv}_i$. Furthermore, we set $\ell_{\overline{\vv}_i}:=\min_{e\succ\overline{\vv}_i}|e|/4$.
	
	To prove Theorem \ref{main2}, for every $\lambda$ large enough we set 
	\begin{equation}
	\label{multiW}
	W_\lambda:=\sum_{i=1}^M \chi_i\left(\Psi_{N_i,\lambda}+\sum_{j=1}^{N_i-1}b_{ij,\lambda}Z_\lambda^{(j)}\right)
	\end{equation}
	where, for every $i=1,\dots,M$, we take
	\begin{itemize}
		\item $\chi_i:\G\to[0,1]$ to be a smooth cut-off function such that $\chi_i\equiv1$ on $B(\overline{\vv}_i,\ell_{\overline\vv_i})$ and $\chi_i\equiv0$ on $\G\setminus B(\overline{\vv}_i,2\ell_{\overline{\vv}_i})$;
		
		\smallskip
		
		\item $\displaystyle\Psi_{N_i,\lambda}:=\lambda^{\frac1{2\mu}}\Psi_{N_i}\big(\sqrt{\lambda}\,x\big)$ for every $x\in\mathcal{S}_{N_i}$, with $\Psi_{N_i}$ as in \eqref{fi};
		
		\smallskip
		
		\item $\displaystyle Z_\lambda^{(j)}(x):=\lambda^{\frac1{2\mu}}Z^{(j)}\big(\sqrt{\lambda}\,x\big)$ for every $x\in\mathcal{S}_{N_i}$ and $j=1,\dots, N_i-1$, with $Z^{(j)}$ as in Lemma \ref{non};
		
		\smallskip
		
		\item $b_{ij,\lambda}=\lambda^{-\alpha}\mathfrak{b}_{ij}$, with $\alpha>0$ and $\mathfrak{b}_{ij}\in\R$ to de determined, for every $j=1,\dots, N_i-1$. 
	\end{itemize}
	We then look for positive solutions of \eqref{nlse} in the form $u_\lambda=W_\lambda+\Phi$, with $W_\lambda$ as in \eqref{multiW} and $\Phi$ a smaller order term to be find. According to Section \ref{sec:prel}, to solve this problem we look for $\Phi\in H^1(\G)$ and $\mathfrak{b}_{ij}\in\R$, $j=1,\dots, N_i-1,i=1,\dots,M$, such that
	\[
	\LL(\Phi)=\EE+\NN(\Phi)
	\]
	with $\LL,\NN,\EE$ as in \eqref{lp}, \eqref{np}, \eqref{e} respectively. 
	
	To this end, we introduce the sets
	\[
	\begin{split}
	&K_{M,\lambda}:=\left\{v\in H^1(\G)\,:\,v=\sum_{i=1}^M\chi_i\sum_{j=1}^{N_i-1}c_{ij}Z_\lambda^{(j)}\quad\text{for some }c_{ij}\in\R,\,j=1,\dots, N_i-1,\,i=1,\dots,M\right\}\\
	&K_{M,\lambda}^\perp:=\left\{v\in H^1(\G)\,:\,\langle v,\chi_i Z_\lambda^{(j)}\rangle_\lambda=0,\quad\forall j=1,\dots, N_i-1,\,i=1,\dots,M\right\}
	\end{split}
	\]
	and the corresponding projections $\Pi_{M,\lambda}:H^1(\G)\to K_{M,\lambda}$, $\Pi_{M,\lambda}^\perp:H^1(\G)\to K_{M,\lambda}^\perp$. Analogously to Section \ref{sec:phi}, we have the following.
	\begin{proposition}
		\label{prop:Mphi}
		For every compact subset $C$ in $\mathbb R^{(N_1-1)\times\dots\times(N_M-1)}$, there exists $\lambda_0>0$ (depending on $C$) such that, for every  $\displaystyle(\mathfrak b_{ij})_{\substack{1\le j\le N_i-1\\ i=1,\dots,M}}\in C$ and  every  $\lambda>\lambda_0$,
		there exist unique $\Phi\in K_{M,\lambda}^\perp$ and coefficients $\displaystyle(c_{ij})_{\substack{1\le j\le N_i-1\\ i=1,\dots,M}}\in \R^{(N_1-1)\times\dots\times(N_M-1)}$ (depending on $\Phi$) for which it holds
		\begin{equation}
		\label{multiLEN}
		\LL(\Phi)=\EE+\NN(\Phi)+\sum_{i=1}^M\chi_i\sum_{j=1}^{N_i-1}c_{ij}  Z^{(j)}_\lambda\,.
		\end{equation}
		Moreover, 
		\begin{equation*}
		\|\Phi\|_\lambda\lesssim  \lambda^{\frac14+\frac1{2\mu}-2\alpha}\,.
		\end{equation*}
	\end{proposition}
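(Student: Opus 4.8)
The plan is to mirror the argument of Lemma \ref{lem:invL} and Proposition \ref{resto}, exploiting the fact that the cut-off functions $\chi_i$ in \eqref{multiW} have pairwise disjoint supports. Indeed, since $\ell_{\overline{\vv}_i}=\min_{e\succ\overline{\vv}_i}|e|/4$, even when two vertices $\overline{\vv}_i,\overline{\vv}_k$ share an edge the balls $B(\overline{\vv}_i,2\ell_{\overline{\vv}_i})$ and $B(\overline{\vv}_k,2\ell_{\overline{\vv}_k})$ remain disjoint. Consequently $\chi_i Z_\lambda^{(j)}$ and $\chi_k Z_\lambda^{(h)}$ are pointwise orthogonal whenever $i\ne k$, and on the support of each $\chi_i$ the function $W_\lambda$ reduces to the single-peak ansatz centered at $\overline{\vv}_i$. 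This decouples, peak by peak, every quantity entering the proof.

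First I would prove the analogue of Lemma \ref{lem:invL}: that $\Pi_{M,\lambda}^\perp\LL:K_{M,\lambda}^\perp\to K_{M,\lambda}^\perp$ is invertible with continuous inverse for $\lambda$ large. As in Remark \ref{rem:i*}, $W_\lambda$ is compactly supported, so $\LL$ is a compact perturbation of the identity and it suffices to show the uniform bound $\|\Pi_{M,\lambda}^\perp\LL(v)\|_\lambda\geq c\|v\|_\lambda$ on $K_{M,\lambda}^\perp$. Arguing by contradiction along $\lambda_n\to+\infty$ and $v_n\in K_{M,\lambda_n}^\perp$ with $\|v_n\|_{\lambda_n}=1$ and $\|\Pi_{M,\lambda_n}^\perp\LL(v_n)\|_{\lambda_n}\to0$, one writes $k_n=\Pi_{M,\lambda_n}\LL(v_n)=\sum_{i,j}c_{ij,n}\chi_i Z_{\lambda_n}^{(j)}$. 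Testing against each $\chi_i Z_{\lambda_n}^{(j)}$ and using the disjoint supports, the system for the $c_{ij,n}$ diagonalizes exactly as in \eqref{AB}--\eqref{B}, so that $\|k_n\|_{\lambda_n}\to0$ as in Step 1 of Lemma \ref{lem:invL}. Rescaling $v_n$ separately around each $\overline{\vv}_i$ then produces, for every $i$, a weak $H^1(\mathcal S_{N_i})$ limit $v_0^{(i)}$ solving the linearized problem \eqref{linear}, hence $v_0^{(i)}\in K$; the constraint $v_n\in K_{M,\lambda_n}^\perp$ forces $v_0^{(i)}$ to be orthogonal to $K$, whence $v_0^{(i)}\equiv0$. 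Since $f'(W_{\lambda_n})$ is supported in $\bigcup_i B(\overline{\vv}_i,2\ell_{\overline{\vv}_i})$, this yields $\int_\G f'(W_{\lambda_n})v_n^2\,dx\to0$, contradicting the normalization exactly as in Step 2.

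Next I would repeat the two steps of Proposition \ref{resto}. On each support the error $\EE$ splits into the same $E_1+E_2$ decomposition of \eqref{eqE}--\eqref{e2}, so summing the single-peak bounds \eqref{2E} and the estimate for $E_2$ over the finitely many $i$ gives $\|\EE\|_\lambda\lesssim\lambda^{\frac14+\frac1{2\mu}-2\alpha}$, with a constant depending on $C$ through the $\mathfrak b_{ij}$. Then, setting $T(v):=(\Pi_{M,\lambda}^\perp\LL)^{-1}(\Pi_{M,\lambda}^\perp\EE+\Pi_{M,\lambda}^\perp\NN(v))$ on $B_\lambda:=\{v\in K_{M,\lambda}^\perp:\|v\|_\lambda\leq c\lambda^{\frac14+\frac1{2\mu}-2\alpha}\}$, the Gagliardo--Nirenberg estimate for $\NN$ used in Proposition \ref{resto} applies unchanged, since $\|W_\lambda\|_{L^\infty(\G)}\lesssim\lambda^{\frac1{2\mu}}$ still holds. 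Hence $T$ is a contraction of $B_\lambda$ into itself for $\lambda$ large, and its fixed point $\Phi$ satisfies \eqref{multiLEN} with $c_{ij}:=\langle\Pi_{M,\lambda}(\LL(\Phi)-\NN(\Phi)-\EE),\chi_i Z_\lambda^{(j)}\rangle_\lambda$ together with the asserted rate.

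The only point requiring care — and thus the main obstacle — is to verify rigorously that the disjointness of the supports of the $\chi_i$'s decouples all the interaction terms: both in the diagonalization of the system for $k_n$ and in the blow-up analysis, the contributions of distinct peaks must not interfere, so that the single-peak estimates of Lemma \ref{lem:invL} and Proposition \ref{resto} transfer verbatim to each vertex. Once this decoupling is granted, no genuinely new difficulty arises with respect to the single-peak construction.
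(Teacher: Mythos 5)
Your proposal is correct and follows essentially the same route as the paper: invertibility of $\Pi_{M,\lambda}^\perp\LL$ via the same contradiction argument with the system for the coefficients decoupling peak by peak (Step 1 verbatim with $\chi_i Z_{\lambda_n}^{(j)}$, Step 2 by rescaling and restricting around each $\overline{\vv}_i$ separately), followed by the error estimate with $E_1,E_2$ summed over the peaks and the unchanged contraction argument. The key observation you isolate --- that the choice $\ell_{\overline{\vv}_i}=\min_{e\succ\overline{\vv}_i}|e|/4$ keeps the supports of the $\chi_i$'s essentially disjoint so that all interaction terms vanish --- is exactly the point the paper relies on.
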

	\begin{proof}
		The argument is the same as that in the proof of Proposition \ref{resto}. 
		
		We first need to show that, for sufficiently large $\lambda$, the operator $\Pi_{\lambda}^\perp\LL$ is invertible with continuous inverse on $K_{M,\lambda}^\perp$. Since when $W_\lambda$ is as in \eqref{multiW} the operator $i^*(f'(W_\lambda))$ is compact, this can be done with a simple adaptation of the proof of Lemma \ref{lem:invL}: we assume the existence of $\lambda_n\to+\infty$, $v_n\in K_{M,\lambda_n}^\perp$ such that $\|v_n\|_{\lambda_n}=1$ for every $n$ and $\|\Pi_{M,\lambda_n}^\perp\LL(v_n)\|_{\lambda_n}\to0$ as $n\to+\infty$ and we seek a contradiction. Step 1 of the proof of Lemma \ref{lem:invL} works exactly the same here, simply with $\chi_i Z_{\lambda_n}^{(j)}$ in place of $\chi Z_{\lambda_n}^{(j)}$. To recover Step 2 we proceed as follows. On the one hand, we have again
		\[
		\int_\G f'(W_{\lambda_n})v_n^2\,dx=1+o(1)\qquad\text{as }n\to+\infty\,.
		\]
		On the other hand, we define again $\tilde{v}_n:={\lambda_n}^{\frac14}v_n\big(x/\sqrt{\lambda_n}\big)$ on the scaled graph $\G_n:=\sqrt{\lambda_n}\G$. Then, for every $i=1\dots,M$, we denote by $\tilde{v}_{n,i}$ the restriction of $\tilde{v}_n$ to $B(\overline{\vv}_i,2\ell_{\overline{\vv}_i}\sqrt{\lambda_n})$ in $\G_n$. Arguing as in Step 2 of the proof of Lemma \ref{lem:invL} it is easy to show that $\tilde{v}_{i,n}$ converges weakly in $H^1$ and strongly in $L^q$ on compact subsets of $\mathcal{S}_{N_i}$ to 0 as $n\to+\infty$ for every $i$, thus yielding
		\[
		\int_\G f'(W_{\lambda_n})v_n^2\,dx=\sum_{i=1}^M\int_{B(\overline{\vv}_i,2\ell_{\overline{\vv}_i}\sqrt{\lambda_n})}\chi_i^{2\mu}\big(x/\sqrt{\lambda_n}\big)f'\left(\Psi_{N_i}(x)+\sum_{j=1}^{N-i-1}b_{ij,\lambda_n}Z^{(j)}(x)\right)\tilde{v}_{n,i}^2\,dx=o(1)
		\]
		and providing the desired contradiction. 
		
		To conclude, we then need to recover the estimates of the error term $\EE$ and the contraction mapping argument. As for $\EE$, the computations are the same as in Step 1 of the proof of Proposition \ref{resto} with the terms $E_1,E_2$ that now read
		\[
		\begin{split}
		E_1:=\sum_{i=1}^M\chi_i''\bigg(\Psi_{N_i,\lambda}+\sum_{j=1}^{N_i-1}b_{ij,\lambda}Z_{\lambda}^{(j)}\bigg)&\,+2\sum_{i=1}^M\chi_i'\bigg(\Psi_{N_i,\lambda}+\sum_{j=1}^{N_i-1}b_{ij,\lambda}Z_{\lambda}^{(j)}\bigg)'\\
		&\,+\sum_{i=1}^M(\chi_i^{2\mu+1}-\chi_i)\bigg(\Psi_{N_i,\lambda}+\sum_{j=1}^{N_i-1}b_{ij,\lambda}Z_{\lambda}^{(j)}\bigg)^{2\mu+1}
		\end{split}
		\]
		and
		\[
		E_2:=\sum_{i=1}^M\chi_i\left[f\(\Psi_{N_i,\lambda}+\sum_{j=1}^{N_i-1}b_{ij,\lambda}Z_{\lambda}^{(j)}\)-f\(\Psi_{N_i,\lambda}\)-f'\(\Psi_{N_i,\lambda}\) \sum_{j=1}^{N_i-1}b_{ij,\lambda} Z_{\lambda}^{(j)}\right]\,.
		\]
		As for the contraction mapping argument, it is enough to repeat verbatim Step 2 of the proof of Proposition \ref{resto}.
	\end{proof}
	Theorem \ref{main2} is then a direct consequence of Proposition \ref{prop:Mphi} and of the next one.
	\begin{proposition}
		There exists $\lambda_0>0$ such that, for every $\lambda\ge\lambda_0$, there exists $(\mathfrak b_{ij}^\lambda)_{\substack{1\le j\le N_i-1\\ i=1,\dots,M}}\in\R^{(N_1-1)\times\dots\times(N_M-1)}$, with $\mathfrak{b}_{ij}^\lambda\to0$ as $\lambda\to\infty$ for every $j=1,\dots,N_i-1$, $i=1,\dots,M$, such that
		the corresponding real numbers $(c_{ij})_{\substack{1\le j\le N_i-1\\ i=1,\dots,M}}$ in Proposition \ref{prop:Mphi} vanish.
	\end{proposition}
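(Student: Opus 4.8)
The plan is to mirror the proof of Proposition~\ref{ridotto} block by block, exploiting the fact that the peaks sit at distinct vertices whose cut-offs have essentially disjoint supports. First I would multiply \eqref{multiLEN} by $\chi_i Z_\lambda^{(j)}$ for each fixed $i=1,\dots,M$ and $j=1,\dots,N_i-1$. Because $\ell_{\overline{\vv}_i}=\min_{e\succ\overline{\vv}_i}|e|/4$, the balls $B(\overline{\vv}_i,2\ell_{\overline{\vv}_i})$ have pairwise disjoint interiors even when two of the $\overline{\vv}_i$'s share an edge (they can meet at most at a midpoint), so $\operatorname{supp}\chi_i\cap\operatorname{supp}\chi_{i'}$ is a null set for $i\neq i'$ and consequently $\langle \chi_{i'} Z_\lambda^{(j')},\chi_i Z_\lambda^{(j)}\rangle_\lambda=0$ whenever $i\neq i'$. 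Together with the pointwise orthogonality of the $e^k$'s within each block (as in \eqref{Bn}, \eqref{est1}), this makes the linear system for the $(c_{ij})$ block-diagonal and, to leading order, diagonal, exactly as in \eqref{rido1}: for each $i,j$ one gets $\langle\LL(\Phi)-\EE-\NN(\Phi),\chi_i Z_\lambda^{(j)}\rangle_\lambda=c_{ij}\big(a\lambda^{\frac1\mu+\frac12}+o(\lambda^{\frac1\mu+\frac12})\big)$. Hence it suffices to choose the $\mathfrak b_{ij}$'s so that every left-hand side vanishes.

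The \emph{key step} is to show that the reduced energy decouples across vertices. I claim that for every $i,j$
\[
\langle\LL(\Phi)-\EE-\NN(\Phi),\chi_i Z_\lambda^{(j)}\rangle_\lambda=-\lambda^{\frac12+\frac1\mu-2\alpha}A\,\frac{\partial G_M}{\partial\mathfrak b_{ij}}\,(1+o(1)),
\]
where $A$ is as in Proposition~\ref{ridotto} and $G_M(\mathfrak b):=\sum_{i=1}^M G_{N_i}(\mathfrak b_{i1},\dots,\mathfrak b_{i,N_i-1})$, each $G_{N_i}$ being the reduced energy \eqref{gi} attached to the vertex $\overline{\vv}_i$. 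This is obtained by repeating the estimates \eqref{EZ}--\eqref{LZ}: pairing against $\chi_i Z_\lambda^{(j)}$ localizes every integral in $B(\overline{\vv}_i,2\ell_{\overline{\vv}_i})$, so only the $i$-th summand of $W_\lambda$ in \eqref{multiW} (and of the corresponding $E_1,E_2$) contributes to the leading order, reproducing the single-vertex computation and thus $\partial_{\mathfrak b_{ij}}G_{N_i}$. The contributions coming from the profiles centred at $\overline{\vv}_{i'}$, $i'\neq i$, are exponentially small in $\sqrt\lambda$, since $\chi_i Z_\lambda^{(j)}$ is supported near $\overline{\vv}_i$ while the other soliton profiles decay exponentially there; these are absorbed into the $o(1)$ exactly as the cut-off remainders \eqref{e1Z}, \eqref{LZ} in the single-peak case. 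Because the variables of the different $G_{N_i}$'s are disjoint, $\partial_{\mathfrak b_{ij}}G_M=\partial_{\mathfrak b_{ij}}G_{N_i}$, which gives the displayed identity.

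Once this is established, the conclusion follows from a degree argument identical to the one in Proposition~\ref{ridotto}. Since the $N_i$'s are odd, Lemma~\ref{grado} gives that each $G_{N_i}$ has the origin as its unique, hence isolated, critical point, with local degree $(-1)^{(N_i-1)/2}\binom{N_i-1}{(N_i-1)/2}\neq0$. As $G_M$ is a sum over disjoint blocks of variables, $\nabla G_M$ vanishes only at the origin, and by the product formula for the Brouwer degree $\mathtt{deg}(\nabla G_M,0)=\prod_{i=1}^M(-1)^{(N_i-1)/2}\binom{N_i-1}{(N_i-1)/2}\neq0$. Interpreting the right-hand side of the displayed identity as $-A\lambda^{\frac12+\frac1\mu-2\alpha}$ times the gradient of a small perturbation of $G_M$, the non-triviality of the local degree forces this perturbation to possess, for all large $\lambda$, a critical point $(\mathfrak b_{ij}^\lambda)$ converging to $0$ as $\lambda\to+\infty$; at such a point all the left-hand sides, and therefore all the $c_{ij}$, vanish.

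The only genuinely new difficulty with respect to Theorem~\ref{main1} is the decoupling in the second paragraph, i.e. controlling the interaction between peaks sitting at different vertices. This is precisely what the factor $1/4$ in the definition of $\ell_{\overline{\vv}_i}$ is designed to handle: it guarantees disjoint supports of the $\chi_i$'s, so that the off-diagonal scalar products vanish identically and the inter-peak integrals are exponentially small. With this in hand, every remaining estimate is a direct transcription of the single-vertex proof, and the reduced energy is the plain sum $G_M$ of the single-vertex energies, whose degree is the product of the individual ones by Lemma~\ref{grado}.
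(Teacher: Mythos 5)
Your argument is correct and coincides with the paper's proof: both multiply \eqref{multiLEN} by $\chi_i Z_\lambda^{(j)}$, observe that the resulting system in the $c_{ij}$'s is diagonal thanks to the (essentially) disjoint supports of the $\chi_i$'s and the orthogonality of the $Z_\lambda^{(j)}$'s, identify the left-hand side with $-A\lambda^{\frac12+\frac1\mu-2\alpha}\,\partial_{\mathfrak b_{ij}}G_{N_i}\,(1+o(1))$ by repeating the computations of Proposition~\ref{ridotto} localized near $\overline{\vv}_i$, and conclude via Lemma~\ref{grado} together with a degree-perturbation argument. The only cosmetic difference is that you assemble the blockwise reduced energies into a single sum $G_M$ and invoke the product formula for the Brouwer degree, whereas the paper applies the degree argument separately in each block $\R^{N_i-1}$; the two formulations are equivalent.
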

	\begin{proof}
		For every $i,j$, multiplying \eqref{multiLEN} by $\chi_i Z_\lambda^{(j)}$ we obtain
		\[
		\langle\LL(\Phi)-\EE-\NN(\Phi), \chi_i Z_\lambda^{(j)} \rangle_\lambda=c_{ij}\langle\chi_i Z_\lambda^{(j)},\chi_i Z_\lambda^{(j)}\rangle_\lambda\,,
		\]
		which with the same computations of the proof of Proposition \ref{ridotto} can be rewritten as
		\[
		c_{ij}\left(a\lambda^{\frac12+\frac1\mu}+o\left(\lambda^{\frac12+\frac1\mu}\right)\right)=-\lambda^{\frac12+\frac{1}{\mu}-2\alpha}A\frac{\partial G_i(\mathfrak b_{i1},\dots,\mathfrak b_{i,N_i-1}) }{\partial \mathfrak b_{ij}}(1+o(1))
		\]
		where, for every $i=1,\dots,M$,
		\[
		G_i(\mathfrak b_{i1},\dots,\mathfrak b_{i,N_i-1}):=\sum\limits_{j=1}^{N_i-1}\left(\sum_{k=1}^{N_i-1} \mathfrak b_{ik} e^k_j\right)^3\,.
		\]
		Since $N_i$ is odd for every $i$, by Lemma \ref{grado} the origin in $\R^{N_i-1}$ is a critical point of $G_i$ with non-zero local degree.  Hence, the small perturbation of $G_i$ appearing in the above equations for $c_{ij}$, $j=1,\dots, N_i-1$, admits a critical point $(\mathfrak{b}_{i1}^\lambda,\dots,\mathfrak{b}_{i,N_i-1}^\lambda)\to(0,\dots,0)$ as $\lambda\to+\infty$, and we conclude.
	\end{proof}
	
	\section{Proof of Corollary \ref{cor1}}
	\label{sec:cor}
	
	We end the paper with the proof of Corollary \ref{cor1}, which in fact requires no more than combining a few simple estimates. On the one hand, if $u_\lambda$ is a one-peaked solution as in Theorem \ref{main1}, then by \eqref{u1picco} direct computations yield formula \eqref{massa1} for $\|u_\lambda\|_{L^2(\G)}^2$ and
	\[
	J_\lambda(u_\lambda)=\lambda^{\frac1\mu+\frac12}\left(\frac N2 J_1(\phi)+o(1)\right),\qquad E(u_\lambda)=\lambda^{\frac1\mu+\frac12}\left(\frac N2 E(\phi)+o(1)\right),
	\]
	whereas if $u_\lambda$ is a multi-peaked solution of Theorem \ref{main2}, by \eqref{umulti} we have \eqref{massamulti} and
	\[
	J_\lambda(u_\lambda)=\lambda^{\frac1\mu+\frac12}\left(\sum_{i = 1}^{M}\frac {N_i}2 J_1(\phi)+o(1)\right),\qquad E(u_\lambda)=\lambda^{\frac1\mu+\frac12}\left(\sum_{i=1}^M\frac {N_i}2 E(\phi)+o(1)\right)\,.
	\]
	Here, as usual $\phi$ is the soliton \eqref{phi} and $J_1(\phi)$, $E(\phi)$ are its action and energy on $\R$, respectively. 
	
	Conversely, for every $\omega>0$ set $\phi_\omega(x):=\omega^{\frac1{2\mu}}\phi(\sqrt{\omega}\,x)$, so that $\phi_\omega\in\mathcal{N}_\omega(\R)$ is the unique solution of \eqref{nlse} on $\R$ with $\omega=\lambda$. Since, for every $\lambda$ large enough, there exists $v_\lambda\in \mathcal{N}_\lambda(\R)$ with compact support on an interval of length $\frac2\lambda$ and such that $J_\lambda(v_\lambda)-J_\lambda(\phi_\lambda)=o(1)$ as $\lambda\to+\infty$, thinking of $v_\lambda$ as a function supported on any given edge of $\G$ yields
	\[
	\inf_{u\in\mathcal{N}_\lambda(\G)}J_\lambda(u)\leq J_\lambda(v_\lambda)=J_\lambda(\phi_\lambda)+o(1)=\lambda^{\frac1\mu+\frac12}J_1(\phi)+o(1).
	\]
	In particular, since $J_1(\phi)>0$ and $N,N_i\ge3$ by assumption, comparing with the previous formula implies
	\[
	J_\lambda(u_\lambda)>\inf_{u\in\mathcal{N}_\lambda(\G)}J_\lambda(u)
	\]
	for every $u_\lambda$ as in Theorems \ref{main1}--\ref{main2}. Hence, $u_\lambda$ is not a ground state of the action in $\mathcal{N}_\lambda(\G)$.
	
	As for the energy, let us distinguish the cases $\mu<2$, $\mu=2$, $\mu>2$.
	
	If $\mu>2$ there is nothing to prove, since it is well-known that $\displaystyle\inf_{u\in H_\nu^1(\G)}E(u)=-\infty$ for every $\G\in\mathbf{G}$ and $\nu>0$ (see e.g. \cite{AST}).
	
	If $\mu=2$, the discussion in \cite{ASTcmp} shows that $\displaystyle\inf_{u\in H_\nu^1(\G)}E(u)=-\infty$ for every $\G\in\mathbf{G}$ and $\nu>\|\phi\|_{L^2(\R)}^2$. Since $N, N_i\ge3$ by assumption, \eqref{massa1}, \eqref{massamulti} yield $\|u_\lambda\|_{L^2(\G)}>\|\phi\|_{L^2(\R)}$ for every $u_\lambda$ in Theorems \ref{main1}--\ref{main2}, thus proving again that it cannot be a ground state of the energy in the mass-constrained space.
	
	If $\mu<2$, note that for every $\nu>0$ one has
	\[
	\|\phi_\omega\|_{L^2(\R)}^2=\nu\qquad\Longleftrightarrow\qquad \omega=\left(\frac\nu{\|\phi\|_{L^2(\R)}^2}\right)^{\frac{2\mu}{2-\mu}}\,.
	\] 
	Hence, considering as above suitable compactly supported functions, it follows that
	\[
	\inf_{u\in H_\nu^1(\G)}E(u)\leq E\left(\phi_{\big(\nu/\|\phi\|_{L^2(\R)}^2\big)^{\frac{2\mu}{2-\mu}}}\right)+o(1)=\left(\frac\nu{\|\phi\|_{L^2(\R)}^2}\right)^{\frac{2+\mu}{2-\mu}}E(\phi)+o(1)
	\]
	as $\nu\to+\infty$. In particular, 
	\[
	\begin{split}
	\nu=\lambda^{\frac1\mu-\frac12}\frac N2\|\phi\|_{L^2(\R)}^2\qquad&\Longrightarrow\qquad \inf_{u\in H_\nu^1(\G)}E(u)\leq \left(\frac N2\right)^{\frac{2+\mu}{2-\mu}}\lambda^{\frac1\mu+\frac12}E(\phi)+o(1)\\
	\nu=\lambda^{\frac1\mu-\frac12}\sum_{i=1}^M\frac {N_i}2\|\phi\|_{L^2(\R)}^2\qquad&\Longrightarrow\qquad\inf_{u\in H_\nu^1(\G)}E(u)\leq \left(\sum_{i=1}^M\frac {N_i}2\right)^{\frac{2+\mu}{2-\mu}}\lambda^{\frac1\mu+\frac12}E(\phi)+o(1)\,.
	\end{split}
	\]
	Since $E(\phi)<0$ and $N,N_i\geq3$, by \eqref{massa1},\eqref{massamulti}, comparing with the explicit formulas above for $E(u_\lambda)$ proves again that $u_\lambda$ is not a ground state of the energy among the functions with the same mass and concludes the proof of Corollary \ref{cor1}.

\end{document}